\documentclass{amsart}
\usepackage{color,graphicx, verbatim, amsmath, amssymb, amsthm, amsfonts, 
mathtools}	
\usepackage[colorlinks=true,citecolor=black,linkcolor=black,urlcolor=blue]{hyperref}

\usepackage{ifthen}
\newtheorem{proposition}{Proposition}[section]
\newtheorem{theorem}[proposition]{Theorem}

\newtheorem{lemma}[proposition]{Lemma}
\newtheorem{prop}[proposition]{Proposition}
\newtheorem{cor}[proposition]{Corollary}

\theoremstyle{definition}
\newtheorem{example}[proposition]{Example}

\theoremstyle{remark}
\newtheorem{remark}[proposition]{Remark}

\numberwithin{equation}{section}

\newcommand{\arxiv}[1]{\href{http://arxiv.org/abs/#1}{\texttt{arXiv:#1}}}

\newcommand{\newword}[1]{\textbf{\emph{#1}}}

\newcommand{\from}{\leftarrow}

\newcommand{\Seed}{\operatorname{Seed}}
\newcommand{\Camb}{\operatorname{Camb}}

\newcommand{\sgn}{\operatorname{sgn}}
\renewcommand{\int}{\operatorname{int}}

\newcommand{\cov}{\mathrm{cov}}

\newcommand{\covered}{{\,\,<\!\!\!\!\cdot\,\,\,}}
\newcommand{\set}[1]{{\lbrace #1 \rbrace}}

\newcommand{\pidown}{\pi_\downarrow}

\newcommand{\br}[1]{{\langle #1 \rangle}}
\newcommand{\g}{{\mathbf g}}

\renewcommand{\c}{{\mathbf c}}

\newcommand{\A}{{\mathcal A}}

\newcommand{\C}{{\mathcal C}}

\newcommand{\tB}{{\widetilde{B}}}

\newcommand{\F}{{\mathcal F}}

\newcommand{\ck}{^{\vee\!}}

\newcommand{\ZZ}{\mathbb{Z}}

\newcommand{\RR}{\mathbb{R}}

\newcommand{\Tits}{\mathrm{Tits}}
\newcommand{\Cone}{\mathrm{Cone}}

\newcommand{\Ex}{\mathrm{Ex}}

\DeclareMathOperator{\inv}{inv}

\DeclareMathOperator{\Cart}{Cart}

\newcommand{\DF}{{\mathcal {DF}}}

\newcommand{\DCamb}{\operatorname{DCamb}}
\newcommand{\DC}{\mathrm{DC}}

\begin{document}

\title{A Cambrian framework for the oriented cycle}
\author{Nathan Reading and David E Speyer}

\thanks{Nathan Reading was partially supported by NSF grant DMS-1101568.   David E Speyer was supported in part by a Clay Research Fellowship}

\begin{abstract}
This paper completes the project of constructing combinatorial models (called frameworks) for the exchange graph and $\g$-vector fan associated to any exchange matrix $B$ whose Cartan companion is of finite or affine type, using the combinatorics and geometry of Coxeter-sortable elements and Cambrian lattices/fans. 
Specifically, we construct a framework in the unique non-acyclic affine case, the cyclically oriented $n$-cycle.
In the acyclic affine case, a framework was constructed by combining a copy of the Cambrian fan for $B$ with an antipodal copy of the Cambrian fan for~$-B$.
In this paper, we extend this ``doubled Cambrian fan'' construction to the oriented $n$-cycle, using a more general notion of sortable elements for quivers with cycles. 
\end{abstract}

\maketitle

\section{Introduction}
The close connection between exchange matrices and Cartan matrices leads naturally to the construction of combinatorial models for cluster algebras in terms of the root systems and Coxeter groups.
Such models have appeared, for example, in \cite{ga,ca2,camb_fan,framework}.  
This paper completes a project, begun in \cite{framework,afframe}, to construct explicit combinatorial models for the principal coefficients cluster algebra $\A_\bullet(B)$ for every $B$ whose Cartan companion $\Cart(B)$ of finite or affine type.
The key ingredients throughout the project are; (1) the notion of a \newword{framework} for $B$; and (2) the combinatorics and geometry of \newword{sortable elements}.
These two ideas are enough, in~\cite{framework}, to produce combinatorial models, called \newword{Cambrian frameworks}, for $\A_\bullet(B)$ when $\Cart(B)$ is of finite type.
When $\Cart(B)$ is of affine type, another idea is needed:  (3) doubling the Cambrian framework.
This third idea was applied, in~\cite{afframe}, to produce a \newword{doubled Cambrian framework} that serves as a combinatorial model for \emph{acyclic} exchange matrices $B$ such that $\Cart(B)$ is of affine type.
There is one infinite family of \emph{non-acyclic} exchange matrices $B$ such that $\Cart(B)$ is of affine type:  the oriented $n$-cycles.
In this paper, we use the notion of sortable elements for quivers with cycles~\cite{cyclic} and a variant of the doubling idea to construct a combinatorial model for $\A_\bullet(B)$ when $B$ is an oriented cycle.

For the remainder of this introduction, we briefly elaborate on these three ideas, with the goal of putting the present work into context, and then state the main results of this paper and some of their consequences for cluster algebras.

The \newword{Cartan companion} of $B$ is the matrix $A=\Cart(B)$ with entries $a_{ij} = -|b_{ij}|$ for $i \neq j$, and $a_{ii} = 2$.
The information contained in $B$ is equivalent to the Cartan matrix $\Cart(B)$ and an orientation $\Omega$ of the diagram of $\Cart(B)$. 
The orientation $\Omega$ is obtained by directing $i \from j$ if $b_{ij}>0$.
We say that $B$ is \newword{acyclic} if $\Omega$ has no directed cycles.

A framework
 is a labeled graph, which, under certain conditions, is isomorphic to the exchange graph of $\A_\bullet(B)$ and from which one can read off combinatorial data such as the $\c$-vectors, exchange matrices, $\g$-vector fan and exchange graph. 
Specifically, each vertex $v$ of the framework is labeled with a set of vectors that encode the $\c$-vectors.
The labels on $v$ define a cone $\Cone(v)$ whose rays encode the $\g$-vectors.
One of the conditions required for a framework to model the exchange graph and $\g$-vector fan is that it be complete.
A framework that is not complete models only part of the exchange graph and $\g$-vector fan.

In~\cite{framework}, we built on work in \cite{sortable,camb_fan,typefree} to construct a Cambrian framework for any acyclic $B$. 
Given a Coxeter group $W$, choosing an acyclic orientation of the Coxeter diagram of $W$ is equivalent to choosing a Coxeter element $c$ in $W$.
The $c$-sortable elements are certain elements of $W$, characterized in terms of the combinatorics of reduced words, or in terms of inversion sets.  
The $c$-Cambrian framework $\Camb_c$, as a graph, is the Hasse diagram of the restriction of the weak order to $c$-sortable elements.  
The labels in the $c$-Cambrian framework are certain roots that can be read off from the canonical reduced words ($c$-sorting words) for $c$-sortable elements.
For each $c$-sortable element $v$, we write $\Cone_c(v)$ rather than $\Cone(v)$ to emphasize the dependence on $c$.
The fan whose maximal cones are $\Cone_c(v)$ for all vertices $v$ of the $c$-Cambrian framework is called the $c$-Cambrian fan.

The $c$-Cambrian framework is complete when $W$ is a finite Coxeter group, but otherwise it is not complete.
The obstruction to completeness is that, for a sortable element $v$, the cone $\Cone_c(v)$ contains the cone $vD$ (the image of the fundamental chamber $D$ under the element $v$).
In particular, each cone $\Cone_c(v)$ meets the interior of the Tits cone $\bigcup_{w\in W}wD$.
However, in infinite type, the $\g$-vector fan has cones that don't meet the interior of the Tits cone. 
Thus we cannot expect the Cambrian construction to describe the entire $\g$-vector fan when $\Cart(B)$ is of infinite type.

We now describe a strategy, introduced in~\cite{afframe}, to escape the Tits cone when $\Cart(B)$ is of affine type. 
In finite type, the $c$-Cambrian fan and the $c^{-1}$-Cambrian fan are related by the antipodal map.
(See \cite[Proposition~1.3]{sort_camb}, \cite[Corollary~3.25]{afframe}, \cite[Remark~3.26]{afframe}, and \cite[Proposition~7.1]{universal}.) 
In affine type, this suggests the following strategy:
Take the graphs $\Camb_c$ and $\Camb_{c^{-1}}$. 
If $v$ is a vertex of $\Camb_c$ and $v'$ is a vertex of $\Camb_{c^{-1}}$ such that $\Cone_c(v) = - \Cone_{c^{-1}}(v')$, then glue $v$ to $v'$, to form a graph $\DCamb_c$. 
Since $\Cone_c(v)=-\Cone_{c^{-1}}(v')$ if and only if the labels on $v'$ are the negatives of the labels on $v$, the combined graph can be labeled consistently to form a framework and an associated fan, the doubled $c$-Cambrian framework and doubled $c$-Cambrian fan.
The doubled $c$-Cambrian fan covers all the points of the Tits cone and the antipodal Tits cone.
When $A$ is of affine type, the Tits cone is essentially a halfspace,
so the doubled $c$-Cambrian fan covers all of space except for part of the boundary of the Tits cone.
We obtain, when $B$ is acyclic and $\Cart(B)$ is of affine type, a complete framework for $B$.

Most affine Coxeter diagrams are trees, so all of their orientations are acyclic. 
The exception is the Coxeter group $\tilde{A}_{n-1}$, whose Coxeter diagram is an $n$-cycle 
and can thus be oriented cyclically.
In this paper, we analyze the case of the cyclically oriented $n$-cycle. 
In~\cite{cyclic}, we explained how to define sortable elements for any orientation of a Coxeter diagram, with cycles or not, and the basic idea for the oriented cycle is the same as for the acyclic affine case: glue together two antipodal combinatorial structures, each built on sortable elements.

The lack of acyclicity complicates matters, however.
In the non-acyclic case, the combinatorics of sortable elements does not provide enough labels to define simplicial cones $\Cone(v)$.
Thus many of the cones $\Cone(v)$ are defined by fewer than $n$ inequalities (in an $n$-dimensional vector space).
In particular, the cones do not combine to form a fan, and there is no Cambrian framework covering the Tits cone.
We solve this problem by taking another view of how to double the Cambrian fan.
In the affine acyclic case, we can describe the glueing of vertices in a different manner: 
A $c$-sortable $v$ in $\Camb_c$ is glued to a $c^{-1}$-sortable $v'$ in $\Camb_{c^{-1}}$ if and only if $\Cone_c(v) \cap \partial \Tits(A)= - \Cone_{c^{-1}}(v') \cap \partial \Tits(A)$.
Applying the same gluing criterion for the oriented cycle, several surprising things happen: 
the combined vertices all have exactly $n$ labels (the combined label sets of the two identified vertices);
the glued graph with combined label sets forms a framework; and
the labels define a complete simplicial fan (which is therefore the $\g$-vector fan).

There is
 an alternate approach to the cluster algebra defined by the oriented $n$-cycle.
This cluster algebra is of finite type $D_n$ so, if one is willing to change the base seed, one can study the cluster algebra of the cyclically oriented $n$-cycle using type $D$ combinatorics.
In this paper we will not do this. Rather, we work with the combinatorics of the affine Coxeter group $\tilde{A}_{n-1}$, thus preserving the rotational symmetry and making the oriented $n$-cycle our base seed.

We conclude this introduction with a more detailed summary of the main results and their consequences.
Let $\Omega$ be the cyclically oriented $n$-cycle, considered as a quiver.
Let $B$ be the corresponding $B$-matrix.
So $\Cart(B)$ is of affine type $\tilde A_{n-1}$.
We will also refer to $B$ itself as the oriented $n$-cycle.

We construct a regular graph $\DCamb_\Omega$ and a labeling $DC_\Omega$ such that:  
\begin{theorem}\label{cycle thm}
$(\DCamb_\Omega, \DC_\Omega)$ is a complete, descending, exact, well-connected, polyhedral, simply connected framework.  
\end{theorem}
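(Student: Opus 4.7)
The plan is to build $(\DCamb_\Omega, \DC_\Omega)$ by gluing together two halves, one using $c$-sortable elements and one using $c^{-1}$-sortable elements in the sense of~\cite{cyclic} for the cyclic quiver $\Omega$, and then to verify the framework axioms together with the six listed properties in turn. As outlined in the introduction, the gluing criterion is that a $c$-sortable $v$ is identified with a $c^{-1}$-sortable $v'$ exactly when $\Cone_c(v)\cap \partial\Tits(A) = -\Cone_{c^{-1}}(v')\cap \partial\Tits(A)$, and in the non-acyclic case the label sets attached to $v$ and to $v'$ are \emph{combined} (rather than identified) to give the label set at the glued vertex. The whole argument hinges on showing that this combination produces exactly $n$ labels spanning a simplicial cone.

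First I would analyze each half separately. On the $c$-side, the combinatorics of sortable elements from~\cite{cyclic} gives a labeled graph $\Camb_c$ together with its cones $\Cone_c(v)$, and, by bookkeeping modeled on~\cite{framework, afframe}, one verifies the framework axioms (base condition, transition/mutation at each edge, reachability) at all edges that do not cross the boundary of $\Tits(A)$. The novelty compared with the acyclic affine case is that, for certain cyclic sortable $v$, the label set has fewer than $n$ elements, so $\Cone_c(v)$ is non-simplicial and strictly larger than $vD$; I would identify exactly which $v$ are ``short'' and pin down which label coordinate directions are missing, using the explicit shape of $c$-sorting words from~\cite{cyclic}. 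The same is then done on the $c^{-1}$-side.

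The main obstacle is the analysis across the seam. I would first show that the boundary condition $\Cone_c(v)\cap \partial\Tits(A) = -\Cone_{c^{-1}}(v')\cap \partial\Tits(A)$ defines a bijection between those $v$ in $\Camb_c$ and those $v'$ in $\Camb_{c^{-1}}$ whose cones meet $\partial\Tits(A)$; this requires a careful use of the description of $\partial\Tits(A)$ for $\tilde A_{n-1}$ together with a projection-to-the-boundary calculation for sortable cones. The key lemma to aim for is that the labels of $v$ \emph{not} shared with $v'$ (after negation) are exactly the ``missing'' directions identified in the previous step, so that the combined label multiset has size $n$ and its elements are linearly independent. Once that is in hand, the base and transition axioms at the glued vertex reduce to two separate checks, one on each side, plus a compatibility check at the edges that now cross the seam; these seam-edges correspond to mutations in directions lying in $\partial\Tits(A)$, and the framework identities for them reduce to the fact that the two halves agree on the boundary.

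With the framework structure established, the six listed properties fall into place as follows. \emph{Polyhedral} is immediate from the $n$-element combined label sets and the computation that the cones glue along common facets. \emph{Descending} and \emph{exact} are edgewise conditions inherited from each half, using the explicit form of $c$-sortable cover relations; they extend across the seam because the seam-edges are governed by the same labels on both sides. \emph{Completeness} follows from the fact that, in affine type, $\Tits(A) \cup -\Tits(A) = \RR^n$ up to part of $\partial\Tits(A)$, combined with the observation that the boundary is covered by the glued cones themselves. \emph{Well-connectedness} is a local rank-two condition at each vertex and each pair of labels, and reduces, case by case, to a rank-two mutation diagram verified either entirely inside one half or across the seam. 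Finally, \emph{simple connectedness} I would obtain by deforming any closed walk in $\DCamb_\Omega$ into a concatenation of rank-two mutation cycles, using the argument structure of~\cite{framework, afframe} adapted to account for seam-crossings; each such rank-two cycle bounds a disk built from two-dimensional slices of the simplicial fan produced above.
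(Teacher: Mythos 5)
Your overall construction matches the paper's: glue $\Camb_\Omega$ to an antipodal copy of $\Camb_{-\Omega}$ by matching cones along $\partial\Tits(A)$, combine the two label sets at each glued vertex, and show the result has exactly $n$ labels. However, there are two substantive problems with the rest of your plan.

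First, you misplace the hard part of the Transition condition. There are no edges ``crossing the seam'' in $\DCamb_\Omega$: vertices are identified, but every edge lies entirely inside $\Camb_\Omega$ or entirely inside $-\Camb_{-\Omega}$. The genuinely difficult case (Case~3b in the paper's Section~\ref{proofs sec}) is an edge inside $\Camb_\Omega$ joining a glued vertex $v$ (with $|J(v)|=n-1$) to a non-glued vertex $w$ (with $|J(w)|\le n-2$), where the label $\gamma$ being transported is the \emph{extra} root borrowed from $-C_{-\Omega}(\eta(v))$. This cannot ``reduce to a check on each side'': $\gamma$ is not a label of any acyclic parabolic Cambrian framework containing $v$ on the $\Omega$ side, and $w$ need not be glued to anything on the $-\Omega$ side, so there is no structure to agree with on the boundary. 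The paper must instead compute directly that $\beta=C^{s_{i-1}}_{c[i+1,i+n-1]}(v)$, that $\omega(\beta,\gamma)=0$, that $s_g\notin J(w)$, and that $\gamma$ reappears as $C^{s_{i+n}}_{c[g+1,g+n-1]}(w)$; this rests on the explicit root formulas of Lemma~\ref{Funny Roots} and the combinatorial identity $g'=g-1$ of Proposition~\ref{prop reversal}, none of which your outline supplies or replaces.

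Second, your plan to verify exactness, polyhedrality, well-connectedness, and simple connectedness one at a time is both unnecessary and, as sketched, far from a proof. The paper proves only that $(\DCamb_\Omega,\DC_\Omega)$ is a complete \emph{descending} framework (Theorem~\ref{main theorem}) and then invokes the general result that a descending framework is automatically exact, polyhedral, well-connected, and simply connected. Your direct arguments for well-connectedness (``a local rank-two condition'') and simple connectedness (``deforming any closed walk into rank-two cycles'') would amount to reproving that general theorem from scratch, and the one-sentence sketches you give do not do so. Also note that ``complete'' is the graph-theoretic condition that $\DCamb_\Omega$ has no half-edges; your argument via $\Tits(A)\cup-\Tits(A)$ covering $V^*$ addresses completeness of the fan, which is a consequence rather than the definition. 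If you adopt the reduction to the descending property, what remains is exactly the verification of the Sign, Base, Transition, Unique minimum, Full edge, and Descending chain conditions, with the Transition condition in Case~3b as the real content.
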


The main significance of Theorem~\ref{cycle thm} is in its consequences for the combinatorics of the principal-coefficients cluster algebra $\A_\bullet(B)$ associated to $B$.
In particular, the graph $\DCamb_\Omega$ is isomorphic to the exchange graph of $\A_\bullet(B)$, and the exchange matrices, $\c$-vectors, $\g$-vectors, and the $\g$-vector fan can be read off from $(\DCamb_\Omega, \DC_\Omega)$.  
We now explain these consequences in greater detail, assuming the most basic definitions and facts about cluster algebras as found, for example, in \cite{ca4}.
For an exposition in the context of frameworks, see \cite[Section~3]{framework}.

We use the set $I(v)$ of edges incident to $v$ as an indexing set both for labels on $v$ and for the associated cluster variables.
The labeling $\DC_\Omega$ associates to each vertex $v$ of $\DCamb_{\Omega}$ a set $\DC_\Omega(v)=\set{\DC_\Omega(v,e):e\in I(v)}$ of vectors that form a basis for the root lattice. 
We write $\Cone_\Omega(v)$ for $\bigcap_{\beta\in \DC_\Omega(v)}\set{x\in V^*:\br{x,\beta}\ge0}$.
We also write $R_\Omega(v)$ for the basis of the weight space that is dual to $\DC_\Omega(v)$ and write $R_\Omega(v,e)$ for the vector in $R_\Omega(v)$ that is dual to $\DC_\Omega(v)$.
The symbol $\omega$ stands for the bilinear form defined by $B$.
The following corollary of Theorem~\ref{cycle thm} follows from \cite[Theorem~3.25]{framework}, \cite[Theorem~3.26]{framework}, and \cite[Corollary 4.6]{framework}.

\begin{cor}\label{cycle exchange} 
There exists a graph isomorphism $v\mapsto\Seed(v)=(\tB^v,X^v)$ from $\DCamb_\Omega$ to the principal coefficients exchange graph $\Ex_\bullet(B)$, where $B$ is the oriented $n$-cycle.
If $v$ is a vertex of $\DCamb_\Omega$, then:
\begin{enumerate}
\item
The exchange matrix $B^v=[b^v_{ef}]_{e,f\in I(v)}$ associated to $\Seed(v)$ is given by \[{b^v_{ef}=\omega(DC_\Omega(v,e),DC_\Omega(v,f))}.\]
\item
For each $e\in I(v$), the $\c$-vector $\c_e^v$ is $C(v,e)$. 
\item
If $X^v=(x^v_e:e\in I(v))$ is the cluster in $\Seed(v)$, then for each $e\in I(v$), the $\g$-vector $\g^v_e=\g(x^v_e)$ is $R(v,e)$.
\end{enumerate} 
Furthermore, the $\g$-vector cone of $\Seed(v)$ is $\Cone_\Omega(v)$, so that the map $v\mapsto\Cone_\Omega(v)$ is an isomorphism from $\DCamb_\Omega$ to the dual graph of the fan defined by $\g$-vectors of clusters in $\A_\bullet(B)$.
\end{cor}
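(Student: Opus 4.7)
The plan is to apply three general results from \cite{framework} — specifically \cite[Theorem~3.25]{framework}, \cite[Theorem~3.26]{framework}, and \cite[Corollary~4.6]{framework} — each of which converts information about an abstract framework satisfying appropriate regularity properties into combinatorial information about the principal-coefficients cluster algebra. Theorem~\ref{cycle thm} supplies exactly the list of properties (complete, descending, exact, well-connected, polyhedral, simply connected) that these cited results require as hypotheses. So, at a conceptual level, the corollary is a bookkeeping exercise whose substance has been loaded into Theorem~\ref{cycle thm}.

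First I would confirm that the framework formalism of \cite{framework} applies verbatim to our setting. The axioms in \cite{framework} are formulated in terms of a skew-symmetrizable exchange matrix and an ambient root lattice, with no acyclicity assumption, so they accept the oriented $n$-cycle $B$ and the labels $\DC_\Omega(v,e)$ taken in the root lattice of $\tilde A_{n-1}$. The bilinear form $\omega$ associated to $B$, the cone $\Cone_\Omega(v)$ cut out by the labels at $v$, and the dual basis $R_\Omega(v)$ in the weight space all match the corresponding constructs used to state the three cited theorems.

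Next I would apply the three cited results in sequence. \cite[Theorem~3.25]{framework} yields the graph isomorphism $v\mapsto\Seed(v)=(\tB^v,X^v)$ between $\DCamb_\Omega$ and the principal-coefficients exchange graph $\Ex_\bullet(B)$, together with assertion~(1): the identity $b^v_{ef}=\omega(\DC_\Omega(v,e),\DC_\Omega(v,f))$. \cite[Theorem~3.26]{framework} then identifies the $\c$-vectors of $\Seed(v)$ with the framework labels at $v$, giving assertion~(2). Finally, \cite[Corollary~4.6]{framework} identifies the $\g$-vectors with the dual basis $R_\Omega(v)$ — giving assertion~(3) — and simultaneously identifies the $\g$-vector cone of $\Seed(v)$ with $\Cone_\Omega(v)$. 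Composing the last identification with the graph isomorphism to $\Ex_\bullet(B)$ produces the concluding statement that $v\mapsto\Cone_\Omega(v)$ is an isomorphism from $\DCamb_\Omega$ to the dual graph of the $\g$-vector fan of $\A_\bullet(B)$.

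The genuine obstacle lies elsewhere, in the proof of Theorem~\ref{cycle thm}; once the framework axioms are established, each assertion of the corollary is an immediate translation through the dictionary provided by \cite{framework}. The only point that even merits care is the verification, in the first paragraph above, that the doubling construction used to produce $(\DCamb_\Omega,\DC_\Omega)$ — with its gluing along the boundary of the Tits cone and its mix of $c$- and $c^{-1}$-sortable data — still yields a framework in the precise technical sense of \cite{framework} for the non-acyclic matrix $B$; this is the interface check, and the framework axiomatization was designed so that it holds without modification.
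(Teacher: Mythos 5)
Your proposal matches the paper's own treatment: the corollary is deduced from Theorem~\ref{cycle thm} by citing \cite[Theorem~3.25]{framework}, \cite[Theorem~3.26]{framework}, and \cite[Corollary~4.6]{framework}, exactly as you do. The interface check you flag (that the framework axioms of \cite{framework} apply without an acyclicity hypothesis) is the right point to be careful about, and it is handled the same way in the paper.
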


Many of the standard conjectures for cluster algebras are collected in \cite[Section~3.3]{framework}.
These include, among others, the following assertions: the sign-coherence and basis properties hold for $\g$-vectors; the $\g$-vectors define a fan; each $\c$-vector has a sign; and a seed is determined by its $\c$-vectors.
Most of the standard conjectures are now theorems in light of \cite{GHKK}.
The construction of the framework $(\DCamb_\Omega, \DC_\Omega)$ establishes them in the case of the oriented cycle.

\begin{cor}\label{all conj cycle}
If $B$ is the oriented $n$-cycle, then Conjectures~3.9--3.20 from \cite{framework} all hold for $B$. 
\end{cor}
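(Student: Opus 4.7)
The plan is to invoke the general framework-to-conjectures translation developed in \cite{framework}, so the substantive work has already been done in establishing Theorem~\ref{cycle thm}. In \cite{framework}, each of Conjectures~3.9--3.20 is shown to follow from an appropriate combination of framework properties: completeness, descending, exactness, well-connectedness, polyhedrality, and simple connectedness. Theorem~\ref{cycle thm} verifies exactly this combination for $(\DCamb_\Omega,\DC_\Omega)$, so the conjectures transfer to $B$ essentially by citation.

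Concretely, I would go through Conjectures~3.9 through 3.20 in order, pointing for each to the specific lemma or theorem in \cite{framework} that derives it from the framework axioms. Corollary~\ref{cycle exchange} already packages the identifications of exchange graph, exchange matrices, $\c$-vectors, $\g$-vectors, and the $\g$-vector fan that underlie many of these conjectures. Sign-coherence of $\c$-vectors reduces to the fact that each label $\DC_\Omega(v,e)$ is either a positive or a negative root, a property built into the construction of $\DC_\Omega$ and verified as part of Theorem~\ref{cycle thm}. The $\g$-vector fan conjectures (simpliciality, cluster-determining $\g$-vectors, cones as nonnegative spans) follow from the polyhedral and completeness properties; the denominator-vector and $F$-polynomial conjectures follow from the descending property together with simple connectedness; and conjectures governing mutation follow from exactness and well-connectedness.

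The only conceivable obstacle is that the oriented cycle is non-acyclic, whereas much of the earlier framework literature focuses on the acyclic setting. However, the general implications in \cite{framework} between the framework axioms and the standard conjectures are stated for arbitrary exchange matrices $B$, with acyclicity entering only into the \emph{construction} of frameworks (via the ordinary Cambrian recipe) rather than into the downstream extraction of cluster-algebraic consequences. I would therefore check each citation only to confirm that no acyclicity hypothesis is silently invoked, and I expect none to be; all the non-acyclicity of $\Omega$ is already absorbed into the construction of $(\DCamb_\Omega,\DC_\Omega)$ itself. Thus the proof reduces to a catalog of references to \cite{framework}, with no new ideas required beyond Theorem~\ref{cycle thm}.
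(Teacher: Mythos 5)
Your approach is the same as the paper's: Corollary~\ref{all conj cycle} is proved there purely by citation, combining Theorem~\ref{cycle thm} with \cite[Theorem~4.1]{framework}, \cite[Theorem~4.2]{framework}, \cite[Corollary~4.4]{framework}, and \cite[Corollary~4.6]{framework}, exactly the catalog-of-references strategy you describe. Your instinct that acyclicity is not silently invoked downstream is also correct; the general implications in \cite{framework} are stated for arbitrary $B$.

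The one thing you miss is the caveat the paper records explicitly: for \cite[Conjecture~3.19]{framework} the framework must be constructed for $-B$ rather than $B$, so that conjecture does not follow from Theorem~\ref{cycle thm} by literal citation alone. This is a small but genuine omission, since you assert flatly that ``no new ideas are required beyond Theorem~\ref{cycle thm}'' and direct your only scepticism at acyclicity, which is the wrong place to look. The gap is easily closed here because $-B$ for the oriented $n$-cycle is again an oriented $n$-cycle (with the opposite orientation $-\Omega$), so the entire construction of $(\DCamb_\Omega,\DC_\Omega)$ applies verbatim with $\Omega$ and $-\Omega$ exchanged; but you should say this, rather than leave the reader to notice that one of the twelve conjectures needs the construction run on a different input.
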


Corollary~\ref{all conj cycle} follows from Theorem~\ref{cycle thm}, using Theorems~\cite[Theorem~4.1]{framework}, \cite[Theorem~4.2]{framework}, \cite[Corollary 4.4]{framework}, and \cite[Corollary 4.6]{framework}.
In the case of \cite[Conjecture~3.19]{framework}, we must replace $B$ by $-B$ in our construction; see the paragraph after the statement of \cite[Conjecture~3.19]{framework}.

Sections~\ref{root system sec} through~\ref{cycle defn sec}  provide relevant background on Coxeter groups, frameworks and Cambrian combinatorics.
Section~\ref{main construction sec} introduces our main construction.
Sections~\ref{sec pairing} and~\ref{long computation} provide the combinatorial lemmas which prove that 
our main construction is well defined.
Section~\ref{proofs sec} proves that our construction has the promised properties.

\section{Root systems and Coxeter groups} \label{root system sec}
In this section, we fix our notation surrounding root systems and Coxeter groups.
We assume a basic familiarity with the topic.
Standard references include \cite{BjBr,Bour,Humphreys,Kac}.
For more background from the same point of view as this paper, see \cite{framework,afframe}.

Let $A$ be an
 $n \times n$ symmetric generalized Cartan matrix, meaning that $a_{ii}=2$, that $a_{ij} \leq 0$ for $i \neq j$, and that $a_{ij} = a_{ji}$. 
 We will write $I$ for an $n$-element set that indexes the rows and columns of $A$. 
When $A$ is of type $\tilde{A}_{n-1}$, we take $I$ to be the cyclic group $\ZZ/n \ZZ$, so objects indexed by $I$ have indices which are cyclic modulo $n$.
For example, in this case, $a_{i(i+1)} = a_{(i+1)i} = -1$ and $a_{ii}=2$.

All of the objects considered in this paper (cluster algebras, root systems, Coxeter groups and frameworks) work more generally with a symmetrizable Cartan matrix rather than a symmetric one.
Our goal in this paper is to study the case where $B$ corresponds to the oriented $n$-cycle, in which case $A$ is symmetric.
Accordingly, we will simplify our notation by describing the case where $A$ is symmetric and (later) where $B$ is skew-symmetric. 
Roots and co-roots are
the same when $A$ is symmetric, 
so our notation will not distinguish them. 

Let $V$ be a real $n$-dimensional vector space with a basis $\Pi=\set{\alpha_i:i\in I}$, let $V^*$ be the dual vector space, and write $\br{x,y}$ for the pairing between $x\in V^*$ and $y\in V$.
The vectors $\alpha_i$ are called the \newword{simple roots}.
The lattice spanned by the simple roots is called the \newword{root lattice}.
When $A$ is of type $\tilde{A}_{n-1}$, the positive roots are the vectors of the form $\alpha_i + \alpha_{i+1} + \cdots + \alpha_{j-1}$ with $i \leq j$ and $i \not \equiv j \bmod n$, indices interpreted modulo $n$. For example, if $n=3$, then one of the positive roots is $\alpha_1+\alpha_2+\alpha_3+\alpha_4 + \alpha_5 = 2 \alpha_1+2 \alpha_2 + \alpha_3$.

The Cartan matrix also determines a symmetric bilinear form $K$ on $V$ given by ${K(\alpha_i, \alpha_j)=a_{ij}}$.
As usual, we define the simple reflection $s_i$ to be the reflection orthogonal (with respect to $K$) to $\alpha_i$, so that $s_i(\alpha_j)=\alpha_j-a_{ij} \alpha_i$.
We write $W$ for the Coxeter group generated by the set $S$ of simple reflections.
The action of $W$ preserves the form $K$.
There is also a dual action of $W$ on $V^*$ as usual.

We write $\Phi$ for the (real) root system $\bigcup_{w \in W} w \cdot \{ \alpha_1, \ldots, \alpha_n \}$.
Each root $\beta$ defines a reflection $t$ by $tx=x-K(\beta, x) \beta$ for all $x\in V$.
We write $\beta_t$ for the \emph{positive} root associated to a reflection $t$.

We write $D$ for the \newword{fundamental chamber} $\{ x \in V^{\ast} : \langle \alpha_i, x \rangle  \geq 0 \}$ and write $\Tits(A)$ for the Tits cone $\bigcup_{w \in W} w D$. 
In this paper, we are concerned with $A$ of affine type $\tilde{A}_{n-1}$.
In this case, $\Tits(A)$ is $\{ x \in V^{\ast} : \langle \delta, x \rangle \, > 0 \} \cup \{ 0 \}$, where $\delta$ is the imaginary root $\alpha_1+ \alpha_2 + \cdots + \alpha_n$.
The kernel of $K$ is $\RR\delta$, so in particular, $\delta$ is fixed by the action of $W$.
The set of real roots $\Phi$ is invariant under translation by~$\delta$, and the image of $\Phi$ in $V/\RR \delta$ is a root system of finite type $A_{n-1}$ (that is to say, the root system for the Coxeter group $S_n$).

Given a Coxeter group $W$ with simple generators $S$, we use the term \newword{weak order} to refer to what is often called the \emph{right} weak order, with cover relations $w\covered ws$ when $\ell(w)<\ell(ws)$.

For each subset $J\subseteq S$, we write $W_J$ of $W$ for the \newword{standard parabolic subgroup} generated by $J$.
For each $w\in W$ and $J\subseteq S$, we write $w_J$ for the \newword{projection} of $w$ to $W_J$, the unique maximal element of $W_J$ below $w$ in the weak order.
We will often take $J$ to be $S\setminus\set{s}$, which we denote by $\br{s}$.

Let $w$ be an element of $W$.
An \newword{inversion} of $w$ is a reflection $t$ such that $\ell(tw)<\ell(w)$. 
We write $\inv(w)$ for the set of inversions ot $w$.
For any $J\subseteq S$, we have $\inv(w_J)=\inv(w)\cap W_J$.
A \newword{cover reflection} of $w$ is $t\in\inv(w)$ such that $tw=ws$ for some $s\in S$.
In this case, 
 $tw$ is covered by $w$ in the weak order.
We write $\cov(w)$ for the set of cover reflections of~$w$.
When $t$ is a cover reflection of~$w$, we have $\inv(tw)=\inv(w)\setminus\set{t}$.

\section{Frameworks for skew-symmetric exchange matrices.} \label{frameworks sec}
In this section, we give definitions and basic results on frameworks, in the special case where the initial exchange matrix is skew-symmetric.
For a discussion of the general skew-symmetrizable case, see \cite[Section~2]{framework}.

The underlying structure of a framework is a simple regular 
\newword{quasi-graph}.
This is a structure obtained from a simple graph by placing some number of \newword{half-edges} on each vertex.
Half-edges ``dangle'' from a vertex without connecting that vertex to any other.
Ordinary edges, i.e.\ edges connecting two vertices, will be called \newword{full edges}.
A quasi-graph is \newword{regular of degree $n$} if each vertex is incident to exactly $n$-edges.
(That is, each vertex $v$ has $k$ half-edges and $n-k$ full edges for some $k\in\set{0,\ldots,n}$ that depends on $v$.)
A quasi-graph is \newword{connected} if, when half-edges are ignored, it is connected in the usual sense.
An \newword{incident pair} is a pair $(v,e)$ where $v$ is a vertex $v$ and $e$ is an edge containing $v$.
We write $I(v)$ for the set of edges $e$ containing a vertex~$v$.

A skew-symmetric
 \newword{exchange matrix} $B=[b_{ij}]$ is a skew-symmetric integer matrix with rows and columns indexed by a set~$I$ with $|I|=n$.
The exchange matrix $B$ has a \newword{Cartan companion} $A=\Cart(B)=[a_{ij}]$ with diagonal entries~$2$ and off-diagonal entries $a_{ij}=-|b_{ij}|$.  
We define $\omega$ to be the skew-symmetric bilinear form with $\omega(\alpha_{i},\alpha_{j})=b_{ij}$ for all $i,j\in I$.

A \newword{framework} for an $n\times n$ exchange matrix $B$ is a pair $(G,C)$ such that $G$ is a connected, $n$-regular quasi-graph and $C$ is a function called the \newword{labeling} from incident pairs $(v,e)$ in $G$ to the vector space $V$, 
 satisfying certain conditions, called the
Sign,
Base, and  
Transition
conditions.
We write $C(v)$ for $\set{C(v,e):e\in I(v)}$.  

In the general setting of \cite{framework}, a framework is a triple $(G,C,C\ck)$, where $C$ and $C\ck$ are two labelings of $G$, one by vectors in the root lattice and one by vectors in the co-root lattice. 
In the skew-symmetric case, the two labelings coincide \cite[Remark 2.2]{framework}. 
Thus for skew-symmetric $B$, a framework is $(G,C,C)$, and we abbreviate this to $(G,C)$ in this paper.\\

\noindent 
\textbf{Sign condition:} \label{Sign condition} 
For each incident pair $(v,e)$, the label $C(v,e)$ is a nonzero vector, and either $C(v,e)$ or $-C(v,e)$ is in the nonnegative span of the simple roots.\\

For a nonzero vector $x$, we write $\sgn(x)=1$ if $x$ is in the nonnegative span of the simple roots and $\sgn(x)=-1$ if $-x$ is in the nonnegative span of the simple roots.
A natural way to satisfy the Sign condition is if all the $C(v,e)$'s are roots, but in general, the labels need not be roots. 
However, in the framework we will construct for the cyclically oriented $n$-cycle, all labels will be roots.
In \cite[Section~2.2]{framework}, we defined a notion of reflection framework; the reflection condition implies that all labels are roots.
However, our framework for the cyclically oriented $n$-cycle will not be a reflection framework.\\

\noindent
\textbf{Base condition:}  \label{Base condition}
There exists a vertex $v_b$ such that $C(v_b)$ is the set of simple roots. \\

We identify the indexing set $I$ with $I(v_b)$ by identifying $e\in I(v_b)$ with the index $i\in I$ such that $C(v_b,e)=\alpha_i$.
\\  

\noindent
\textbf{Transition condition:} \label{Transition condition}
Suppose $v$ and $v'$ are distinct vertices incident to the same edge $e$.
Then $C(v,e)=-C(v',e)$.
Furthermore, if $\beta=C(v,e)$
and $\gamma\in C(v)\setminus\set{\beta}$, then $\gamma+[\sgn(\beta)\omega(\beta,\gamma)]_+\,\beta$ is in $C(v')$.\\

One can easily check that the transition condition is symmetric in $v$ and $w$.

Given an edge $e$ connecting $v$ to $v'$, there is a bijection $\mu_e:I(v)\to I(v')$ with $\mu_e(e)=e$, defined to make the following condition equivalent to the Transition condition. 
 See \cite[Section 2.1]{framework}.\\

\noindent
\textbf{Transition condition, restated:} \label{Transition condition, strengthened}
Suppose $v$ and $v'$ are distinct vertices incident to the same edge $e$.
Then $C(v,e)=-C(v',e)$.
Furthermore, if $f\in I(v)\setminus\set{e}$, then 
\[C(v',\mu_e(f))=C(v,f)+[\sgn(C(v,e))\omega(C(v,e),C(v,f))]_+\,C(v,e).\]
 
We warn the reader that the inverse function $I(v') \to I(v)$ is also denoted $\mu_e$.

For each vertex $v$ in a framework $(G,C)$, define $\Cone(v)$ to be the cone in $V^{\ast}$ given by $\bigcap_{\beta\in C(v)}\set{x\in V^*:\br{x,\beta}\ge0}$.
By \cite[Proposition~2.4]{framework}, each $\Cone(v)$ is a simplicial cone.

In \cite{framework}, several global conditions on frameworks are defined, and the consequences of these conditions are discussed.
Here, we wish to bypass precise definitions of some of these conditions.
We can do this because we will establish one global condition, the descending condition, which implies many of the others.
For a full discussion, see \cite[Section~4]{framework}.

The framework $(G,C)$ is \newword{injective} if the map $v\mapsto C(v)$ 
is injective, and it is \newword{complete} if $G$ has no half-edges.
The framework is \newword{polyhedral} if the cones $\Cone(v)$ for $v\in G$ are the maximal cones of a fan and if the map $v\mapsto\Cone(v)$ is injective. 
The adjectives \newword{well-connected}, \newword{simply connected}, \newword{ample}, and \newword{exact} can also be applied to a framework as explained in \cite[Section~4]{framework}.

A framework is \newword{descending} if it satisfies the following three conditions.\\

\noindent
\textbf{Unique minimum condition:}   \label{Unique minimum condition}
If a vertex $v$ of $G$ has $\set{\sgn(\beta):\beta\in C(v)}=\set{1}$, then~$v$ is the base vertex $v_b$.\\

\noindent
\textbf{Full edge condition:}  \label{Full edge condition}
If $(v,e)$ is an incident pair and $\sgn(C(v,e)) = -1$ then $e$ is a full edge.\\

In light of the Sign and Transition conditions, we can orient the edges of $G$ as follows:
If $e$ is an edge incident to a vertex $v$, then we direct $e$ towards $v$ if $\sgn(C(v,e)) = 1$ and away from $v$ if $\sgn(C(v,e))=-1$.\\

\noindent
\textbf{Descending chain condition:}  \label{Descending chain condition}
There exists no infinite sequence $v_0\to v_1\to\cdots$.\\

For us, the descending property is the key to proving most of the other key properties of the framework $(\DCamb_\Omega, \DC_\Omega)$ for the oriented $n$-cycle.
This is because \cite[Theorem~4.20]{framework} says that a descending framework is exact, polyhedral, well-connected, and simply connected.
This reduces the proof of Theorem~\ref{cycle thm} to the following assertion:

\begin{theorem} \label{main theorem}
$(\DCamb_\Omega, \DC_\Omega)$ is a complete descending framework.
\end{theorem}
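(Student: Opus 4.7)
The plan is to verify, in turn, each component of the assertion: the three framework axioms (Sign, Base, Transition), completeness, and then the three descending conditions (Unique minimum, Full edge, Descending chain). The overall strategy is to leverage as much as possible from the already-established Cambrian frameworks $\Camb_c$ and $\Camb_{c^{-1}}$ associated with the two sortable-element structures that the gluing procedure combines, and then to do the new work at the glued vertices and edges. Before any of that, I would spend some care on showing that the gluing prescription $\Cone_c(v) \cap \partial\Tits(A) = -\Cone_{c^{-1}}(v') \cap \partial\Tits(A)$ actually produces a well-defined bijection on the pairs to be identified, that the combined label set at each glued vertex has cardinality exactly $n$, and that this set spans the appropriate cone; this is presumably the content of the combinatorial sections Sections~\ref{sec pairing} and~\ref{long computation}.

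The Base and Sign conditions should be essentially immediate. The base vertex is the identity on the $c$-sortable side (its cone is the fundamental chamber, which lies entirely in the interior of the Tits cone, so it is not glued to anything on the $c^{-1}$ side); its labels are the simple roots. For Sign, labels coming from the $c$-side are positive roots by construction of $\Camb_c$, labels coming from the $c^{-1}$-side, viewed after the antipodal identification, are negatives of positive roots, and labels on glued vertices inherit signs from whichever side they came from.

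The Transition condition is the main obstacle, and I expect to split it into three types of edges: edges internal to the $c$-side, edges internal to the $c^{-1}$-side, and edges at a glued vertex $v$ obtained by identifying some $c$-sortable $v_1$ with some $c^{-1}$-sortable $v_2$. The first two are inherited from the Cambrian framework structure of the pieces, but one must check that edges which were half-edges on one side and also on the other get correctly re-glued to each other, rather than producing mismatched transitions. The serious work is at the glued vertices: the label set is $\DC_\Omega(v_1) \cup \DC_\Omega(v_2)$, mixing positive and negative roots, and the Transition formula must be verified for an edge that leaves via the $c^{-1}$-side of $v$ even though its label came from the $c$-side, and vice versa. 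This is where I expect to rely heavily on explicit computations of inversion sets and cover reflections for sortable elements in the non-acyclic setting of \cite{cyclic}, together with a pairing lemma identifying exactly which $c$-sortable and $c^{-1}$-sortable elements are glued.

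With the framework axioms in hand, completeness and the descending conditions follow a more formulaic pattern. Completeness says every half-edge gets matched; the gluing criterion is engineered so that each missing transition on the $c$-side corresponds to a missing transition on the $c^{-1}$-side, and I would need a direct verification that these half-edges pair up one-to-one. For Unique minimum, any vertex other than $v_b$ either has a $c$-sortable component different from the identity (which admits a cover reflection, hence a negative label on some edge) or lies on the $c^{-1}$-side (where all labels are negatives of roots, hence have sign $-1$). The Full edge condition follows from completeness once one checks the orientation convention. For the Descending chain condition, I would try to use a length-like statistic on the $c$-sortable side and a dual statistic on the $c^{-1}$-side, and verify that every directed edge strictly decreases one of these; the subtle point will be transitions that cross from one side to the other, which is again controlled by the pairing analysis. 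The hard part throughout is the detailed bookkeeping at glued vertices, and it is exactly this that the preparatory Sections~\ref{sec pairing} and~\ref{long computation} are there to support.
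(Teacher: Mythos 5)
Your outline matches the paper's proof at the top level: Base and Sign are immediate, the descending conditions are routine once completeness is in hand, and the Transition condition at the glued vertices is the real work. But there is a genuine gap in how you propose to dispose of the ``easy'' Transition checks. You describe the two pieces as the Cambrian frameworks $\Camb_c$ and $\Camb_{c^{-1}}$ and say the Transition condition on edges internal to each side is ``inherited from the Cambrian framework structure of the pieces.'' For the oriented $n$-cycle there is no Coxeter element $c$, and the pieces $\Camb_\Omega$ and $\Camb_{-\Omega}$ are \emph{not} frameworks: many vertices carry only $n-1$ labels, the cones do not form a fan, and no global Transition condition has been established for them, so there is nothing to inherit from. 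The paper's actual mechanism is to reduce every instance of the Transition condition to the Transition condition of a genuine Cambrian framework inside an acyclic standard parabolic subgroup $W_J$, using that $C_{c(\Omega,J)}(v)\subseteq C_\Omega(v)$ (Lemma~\ref{observe}), organized by a case analysis on $|J(v)|$ and $|J(w)|$. This parabolic-reduction step is the missing idea, and without it even your ``internal'' cases are unproved.

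Moreover, one case resists that reduction and does not fit your trichotomy: when $v$ is glued (so $J(v)=\br{s_i}$), the extra label $\gamma=-C^{s_{i+2}}_{c[i+n,i+2]}(\eta(v))$ imported from the $-\Omega$ side must be transported down an edge of $\Camb_\Omega$ to a vertex $w$ with $|J(w)|\le n-2$. That edge is ``internal to the $\Omega$-side'' in your decomposition, yet no parabolic contains the data needed to inherit the claim. The paper handles it by showing that $\beta$ must be the unique negative label whose wall does not cross $\partial\Tits(A)$, computing $\omega(\beta,\gamma)=0$ explicitly, and then verifying by an inversion-set argument that $\gamma$ reappears as $C^{s_i}_{J(w)\cup\{s_i\}}(w)$; some concrete computation of this kind is unavoidable. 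A smaller slip: your Unique-minimum argument for the $-\Camb_{-\Omega}$ side (``all labels are negatives of roots, hence have sign $-1$'') is false as stated, since $C_{-\Omega}(v')$ contains negative roots (from cover reflections) whose negatives are positive labels; the correct argument is that every $-\Omega$-sortable element lies in a proper parabolic $W_{\br{r}}$, so Lemma~\ref{sign omit} supplies a positive root in $C_{-\Omega}(v')$ whose negative is the required negative label.
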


The rest of the paper is devoted to constructing $(\DCamb_\Omega, \DC_\Omega)$ and proving Theorem~\ref{main theorem}.

\section{Cambrian frameworks} \label{sec cambrian}  
In this section, we review  the construction of a Cambrian framework for any acyclic exchange matrix $B$. (See~\cite{framework}.)
The information contained in an acyclic exchange matrix $B$ is equivalent to $(A,c)$, where $A=\Cart(B)$ and $c$ is the Coxeter element obtained by ordering $S$ such that $s_i$ precedes $s_j$ whenever $b_{ij}>0$.
We will use the data $(A,c)$ to define a framework for $B$.
Since the exchange matrix $B$ of interest in this paper is skew-symmetric and thus $A$ is symmetric, we present the Cambrian framework construction in the special case of symmetric $A$.

As before, let $\Phi$ be the root system associated to $A$.
Let $W$ be the Coxeter group associated to $A$ and let $s_1 s_2 \cdots s_n$ be a reduced word for $c$.
We write $c^{\infty}$ for the infinite word $(s_1 s_2 \cdots s_n) (s_1 s_2 \cdots s_n) \cdots$ in the simple generators $s_i$. 
Let $w$ be an element of $W$.  The \newword{$c$-sorting word} for $w$ is the lexicographically leftmost subword of $c^{\infty}$ which is a reduced word for $w$.   
Let $J_r \subseteq S$ be the letters from the $r$-th copy of $(s_1 s_2 \cdots s_n)$ that occur in the $c$-sorting word for $w$.
The group element $w$ is \newword{$c$-sortable} if and only if $J_{1} \supseteq J_2 \supseteq J_3 \supseteq \cdots$. (See 
\cite[Section 2]{sortable}.)
The subword of $s_1\cdots s_n$ consisting of the letters indexed by $J_r$ is called the \newword{$r$-th block of the $c$-sorting word for $w$}.

Given $w\in W$, there is a unique maximum element, denoted $\pidown^c(w)$, among all $c$-sortable elements below $w$ in the weak order \cite[Corollary~6.2]{typefree}.   
The restriction of the weak order to $c$-sortable elements is a meet semilattice called the \newword{$c$-Cambrian semilattice}, and $\pidown^c$ is an order preserving map from the weak order to the $c$-Cambrian semilattice \cite[Theorem~6.1]{typefree}.

We will define a set of labels $C^r(v)$, indexed by the simple reflections, for each $c$-sortable element $v$.
See \cite[Section 5]{typefree} for more information, including an alternate recursive definition.

Let $s_{i_1} s_{i_2} \cdots s_{i_N}$ be the $c$-sorting word for $v$. 
Viewing this as a subword of $c^\infty$, there is some first occurrence of $r$ in $c^\infty$ which is not in $s_{i_1} s_{i_2} \cdots s_{i_N}$.
Suppose this occurrence falls between $s_{i_t}$ and $s_{i_{t+1}}$. 
Then we say that $s_{i_1} s_{i_2} \cdots s_{i_N}$ \newword{skips} $r$ between $s_{i_t}$ and $s_{i_{t+1}}$.
(If $t=0$, we mean that the skip is before $s_1$ and if $t=N$, we mean that the skip is after $s_N$.)
We define $C_c^r(v) = s_{i_1} s_{i_2} \cdots s_{i_t} \alpha_r$ and $C_c(v)=\set{C_c^r(v):r\in S}$.

To realize $C_c$ as a labeling on incident pairs, we use the following lemma, which is part of \cite[Lemma~5.11]{framework}.  
\begin{lemma}\label{cov beta part 1}
If $v'\covered v$ in the $c$-Cambrian semilattice, then there exists a unique root $\beta$ such that $\beta\in C_c(v')$ and $-\beta\in C_c(v)$.
The root $\beta$ is positive.
\end{lemma}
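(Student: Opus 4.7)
The plan is to mirror the standard structural induction on $c$-sortable elements, proceeding by induction on the rank of $W$ and on the length of $v$. Pick a simple reflection $s\in S$ that is initial in $c$, meaning $\ell(sc)<\ell(c)$. Recall the key structural dichotomy (from \cite{sortable,typefree}): a $c$-sortable element $u$ begins its $c$-sorting word with $s$ precisely when $s\in\inv(u)$, in which case $u=su''$ with $u''$ being $scs$-sortable; otherwise $u\in W_{\br{s}}$ and $u$ is $sc$-sortable as an element of this parabolic subgroup. Since $v'\le v$ in weak order forces $\inv(v')\subseteq\inv(v)$, there are exactly three cases for the cover $v'\covered v$: (A) neither has $s$ as a descent; (B) both do; (C) $v$ has $s$ as a descent but $v'$ does not.

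In Case (A), both $v'$ and $v$ are $sc$-sortable in $W_{\br{s}}$, and $v'\covered v$ descends to a cover in the $sc$-Cambrian semilattice of $W_{\br{s}}$. The labels $C_c^s(v')=C_c^s(v)=\alpha_s$ agree, while all other labels match the corresponding $sc$-Cambrian labels computed in $W_{\br{s}}$, so induction on rank delivers the unique positive $\beta$, which cannot be $\alpha_s$. In Case (B), write $v=sv''$ and $v'=s(v')''$ with $v''$ and $(v')''$ both $scs$-sortable. Standard sortable-element machinery shows $v'\covered v$ corresponds to $(v')''\covered v''$ in the $scs$-Cambrian semilattice, and the label sets transform by left multiplication by $s$ (with appropriate attention to the label indexed by $s$). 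Induction on length gives a unique positive $\bar\beta\in C_{scs}((v')'')$ whose negative lies in $C_{scs}(v'')$, and we set $\beta=s\bar\beta$. To see $\beta$ is still positive we check $\bar\beta\neq\alpha_s$; this holds because $\alpha_s$ cannot simultaneously appear in $C_{scs}((v')'')$ and with opposite sign in $C_{scs}(v'')$ for the two sortable elements in question.

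The heart of the proof lies in Case (C), where the cover in the $c$-Cambrian semilattice is typically not a cover in the weak order, since the weak-order interval $[v',v]$ may contain non-sortable elements that the Cambrian semilattice skips over. Here $v'\in W_{\br{s}}$ gives $C_c^s(v')=\alpha_s$, making $\beta=\alpha_s$ the natural candidate. To confirm $-\alpha_s\in C_c(v)$, I would use the characterization of $v$ as the minimum $c$-sortable element with $v\ge v'$ in weak order and $s\in\inv(v)$: this characterization forces the skip structure of $v$'s $c$-sorting word, after its initial $s$, to produce $-\alpha_s$ as a label at some index. Uniqueness in this case would then follow because any other $\gamma\in C_c(v')\cap(-C_c(v))$ would, after stripping off the initial $s$ from $v$, force a matching between labels of $v'$ (which lives in $W_{\br{s}}$) and labels of $v''$ not involving $\alpha_s$, which the previous two cases rule out by induction. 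The main obstacle I expect is the combinatorial bookkeeping in Case (C): pinpointing the precise index $r$ with $C_c^r(v)=-\alpha_s$, and verifying that no second sign-change slips through despite the fact that the interval $[v',v]$ is not a cover relation in the weak order.
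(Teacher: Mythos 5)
You should first know that the paper does not prove this lemma at all: it is imported verbatim, with the sentence ``which is part of \cite[Lemma~5.11]{framework}'' serving as the entire justification. So there is no in-paper argument to compare against; your proposal has to stand on its own. Your inductive skeleton (choose $s$ initial in $c$, split according to whether $s$ lies in $\inv(v')$ and $\inv(v)$, use the parabolic/left-multiplication recursions for sortability and for $C_c$) is the standard machinery from \cite{typefree} and is the right way to start, and Case (A) is essentially complete.

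However, there are genuine gaps. The decisive one is Case (C), which you yourself flag as ``the main obstacle I expect'': everything there is written in the conditional (``I would use\dots'', ``would then follow\dots''), and neither the existence claim nor uniqueness is actually established. Concretely, existence amounts to showing $-\alpha_s\in C_c(v)$, which by Proposition~\ref{lower walls} is the statement that $s\in\cov(v)$; this is not forced merely by $s\in\inv(v)$ and $v'\in W_{\br{s}}$ unless you invoke the identification of the elements covered by $v$ in the Cambrian semilattice with $\pidown^c(tv)$ for $t\in\cov(v)$ (then $s\le tv$ for $t\ne s$ would give $s\le\pidown^c(tv)=v'$ by Lemma~\ref{helpful}, contradicting $v'\in W_{\br{s}}$) — a fact you never cite. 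Your appeal to ``the skip structure'' does not substitute for this. Uniqueness in Case (C) is likewise only gestured at. A second, smaller gap is in Case (B): the assertion that $\bar\beta\ne\alpha_s$ is exactly what needs proof, not a reason. It can be repaired: if $\bar\beta=\alpha_s$ then $\beta=-\alpha_s\in C_c(v')$ forces $s\in\cov(v')\subseteq\inv(v)$ by Proposition~\ref{lower walls}, while $-\beta=\alpha_s$ being a \emph{positive} element of $C_c(v)$ forces $s\notin\inv(v)$ because $vD\subseteq\Cone_c(v)$ (Theorem~\ref{pidown fibers}) makes every positive label a non-inversion of $v$; but this argument, which also yields positivity of $\beta$ uniformly in all cases, does not appear in your write-up. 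As it stands the proposal is a credible plan rather than a proof.
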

For each edge $e$ corresponding to a cover relation $v'\covered v$ in $\Camb_c$, we define $C_c(v,e)=-\beta$ and $C_c(v',e)=\beta$.
The degree of each vertex of $\Camb_c$ is at most $n$.
If the degree is less than $n$, then we attach half-edges to $v$ to make the degree $n$.
These half-edges are labeled with the labels in $C_c$ that were not already associated to full edges.
We overload the notation $\Camb_c$ to also denote the quasi-graph with these added half-edges, and also re-use the symbol $C_c$ for the labeling of incident pairs in $\Camb_c$.
The following is \cite[Theorem~5.12]{framework}.

\begin{theorem}\label{camb frame}  
$(\Camb_c,C_c)$ is a descending framework for the exchange matrix~$B$.
\end{theorem}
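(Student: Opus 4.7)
The plan is to verify, in order, the three framework axioms (Sign, Base, Transition) and then the three descending axioms (Unique minimum, Full edge, Descending chain). Of these, Sign and Base are essentially immediate from the definition of $C_c$, the Transition condition is the main combinatorial content, and the descending axioms each reduce to a short argument using properties of the $c$-Cambrian semilattice.

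First I would dispose of the Sign condition and the Base condition. Each label $C_c^r(v) = s_{i_1}\cdots s_{i_t}\alpha_r$ is the image of a simple root under an element of $W$, hence a real root; in particular it is nonzero, and every real root is either positive or negative. For the Base condition, take $v_b$ to be the identity element, whose $c$-sorting word is empty; then for each $r \in S$ the first copy of $r$ in $c^\infty$ is itself a skip at position $t=0$, so $C_c^r(v_b) = \alpha_r$, and $C_c(v_b)$ is the set of simple roots, identifying $I(v_b)$ with $I$.

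The Transition condition is the heart of the matter. Every full edge of $\Camb_c$ arises from a cover relation $v'\covered v$ in the $c$-Cambrian semilattice, and Lemma~\ref{cov beta part 1} already provides the unique positive root $\beta$ with $\beta\in C_c(v')$ and $-\beta\in C_c(v)$, so $C_c(v,e)=-\beta=-C_c(v',e)$. For the mutation portion, with $C_c(v,e)=-\beta$ negative, the prescribed update of a label $\gamma\in C_c(v)\setminus\set{-\beta}$ simplifies to $\gamma\mapsto \gamma-[\omega(\beta,\gamma)]_+\,\beta$, since $\sgn(-\beta)\,\omega(-\beta,\gamma)=\omega(\beta,\gamma)$. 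I would invoke the full form of \cite[Lemma~5.11]{framework}, whose remaining parts (beyond the fragment recorded here as Lemma~\ref{cov beta part 1}) supply the explicit pairing between $C_c(v)\setminus\set{-\beta}$ and $C_c(v')\setminus\set{\beta}$ together with a formula for how matched labels differ by a nonnegative integer multiple of $\beta$. The remaining task is to identify that multiple with $[\omega(\beta,\gamma)]_+$, using $\omega(\alpha_i,\alpha_j)=b_{ij}$ together with the fact that the orientation of the diagram of $A$ encoded by $c$ matches the sign pattern of $b_{ij}$; in effect this translates a sign-sensitive partial reflection by $\beta$ on the Cambrian side into the skew-symmetric mutation formula. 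This sign-matching, and the verification that $\mu_e$ really realizes the asserted pairing, is the step I expect to be the main obstacle.

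Finally, the descending axioms. For the Unique minimum condition, if $v \neq v_b$ is $c$-sortable, then $v$ is not minimal in $\Camb_c$ (the identity is the global minimum), so some Cambrian cover $v'\covered v$ exists, and Lemma~\ref{cov beta part 1} then contributes a negative root to $C_c(v)$, contradicting the hypothesis that $\sgn$ is identically $+1$ on $C_c(v)$. For the Full edge condition, I would argue, using the combinatorics of $\pidown^c$ from \cite{typefree}, that the negative labels in $C_c(v)$ are in bijection with the Cambrian covers $v'\covered v$: every negative label $-\beta$ comes from the unique Cambrian cover whose associated reflection has $\beta$ as its positive root (possibly skipping past non-sortable elements of the weak order interval, which is why the rank in $\Camb_c$ can exceed the rank in weak order). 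Hence negative labels are never attached to half-edges, and the half-edges carry only positive labels. For the Descending chain condition, the orientation convention makes an arrow $v\to v'$ across a full edge correspond to a cover $v'\covered v$ in $\Camb_c$, so $\ell(v')<\ell(v)$ in $W$; since the length function is nonnegative integer-valued, no infinite descending sequence exists.
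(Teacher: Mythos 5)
You should first be aware that the paper does not prove Theorem~\ref{camb frame}: it is quoted wholesale as \cite[Theorem~5.12]{framework}, together with the remark that the cited result is actually stated for a descending \emph{reflection} framework and that \cite[Proposition~2.13]{framework} is what permits restating it in the form given here. So your proposal is not being measured against an argument in this paper but against the proof in \cite{framework}, and in outline it follows the same route: check Sign, Base, Transition, then the three descending axioms. Your handling of Sign, Base, Unique minimum (a non-identity sortable element has a Cambrian cover below it, hence by Lemma~\ref{cov beta part 1} a negative label --- equivalently, by Proposition~\ref{lower walls}, it has a cover reflection), Full edge, and Descending chain is correct and matches the standard arguments; your simplification of the mutation rule to $\gamma \mapsto \gamma - [\omega(\beta,\gamma)]_+\,\beta$ for $\beta$ positive is also right.

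The genuine gap is exactly where you flagged it: the Transition condition. As written, your argument is ``invoke the rest of \cite[Lemma~5.11]{framework} for the pairing of labels, and then identify the nonnegative multiple of $\beta$ with $[\sgn(\beta)\omega(\beta,\gamma)]_+$,'' with that identification left open. That identification is not bookkeeping; it is the substantive content of the theorem. The route in \cite{framework} is to prove the \emph{Reflection} condition --- the matched label is either $\gamma$ or its reflection $t_\beta\gamma = \gamma - K(\beta,\gamma)\beta$, according to the sign of $\omega(\beta,\gamma)$ --- which rests on the recursive description of the labels $C_c^r(v)$ from \cite{typefree}, and then to deduce the Transition condition via \cite[Proposition~2.13]{framework}, which requires the identity $|\omega(\beta,\gamma)| = K(\beta,\gamma)$ (up to the relevant signs) for pairs of labels at a common vertex. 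Neither the case analysis deciding \emph{which} of $\gamma$ and $t_\beta\gamma$ appears in $C_c(v')$, nor the compatibility between the skew form $\omega$ and the symmetric form $K$ on label pairs, follows formally from $\omega(\alpha_i,\alpha_j)=b_{ij}$ and the orientation convention; both need the sortable-element machinery. Until that step is carried out (or explicitly delegated to \cite[Theorem~5.12]{framework}, at which point there is nothing left to prove), the proposal is a proof outline rather than a proof.
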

We call $(\Camb_c,C_c)$ the \newword{Cambrian framework} for $B$.
In fact, \cite[Theorem~5.12]{framework} stated that ``$(\Camb_c,C_c)$ is a descending reflection framework, '' but \cite[Proposition~2.13]{framework} allows us to restate the theorem as above.
For more on reflection frameworks and how they relate to frameworks, see \cite[Section~2.2]{framework}.

For any vertex $v$ of a framework, we defined the corresponding cone $\Cone(v)$, dual to the vectors $C(v)$. 
In the context of a Cambrian framework, we write $\Cone_c(v)$ for clarity.
The cones $\Cone_c(v)$ and their faces form a fan~\cite[Corollary 5.15]{framework}.  
This fan is denoted $\F_c$ and called the \newword{$c$-Cambrian fan}.
The polyhedral geometry of the cones~$\Cone(v)$ is related to the combinatorial operation $\pidown^c$. 
The following theorem is \cite[Theorem~6.3]{typefree}.
(The second assertion of the theorem doesn't appear in \cite[Theorem~6.3]{typefree}, but is an immediate consequence of the first, since $\pidown^c(v)=v$.)

\begin{theorem} \label{pidown fibers}
Let $v$ be $c$-sortable. Then $\pidown^c(w)=v$ if and only if $w D \subseteq \Cone_c(v)$.
In particular, $vD$ is contained in $\Cone(v)$.
\end{theorem}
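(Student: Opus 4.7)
The statement divides into the biconditional $\pidown^c(w)=v\iff wD\subseteq\Cone_c(v)$ and its specialization to $w=v$. Since $v$ being $c$-sortable means $\pidown^c(v)=v$, the specialization follows from the biconditional applied with $w=v$, so I would focus on proving the biconditional by induction on $\ell(w)$. The base case $w=e$ is immediate: the empty $c$-sorting word skips every simple index $r$ at position $0$, so $C_c(e)=\Pi$ and $\Cone_c(e)=D$, while $\pidown^c(e)=e$ because $e$ is the only $c$-sortable element $\le e$; hence $eD=D=\Cone_c(e)$.

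For the inductive step I would first handle the forward direction. Pick a simple right descent $s$ of $w$, set $w'=ws$, and let $v=\pidown^c(w)$ and $v'=\pidown^c(w')$. By induction $w'D\subseteq\Cone_c(v')$, and the chambers $wD$ and $w'D$ share the wall $(w\alpha_s)^\perp$. Because $\pidown^c$ is order preserving \cite[Theorem~6.1]{typefree}, we have $v'\le v$. The argument then splits: if $v=v'$, one must check that neither $w\alpha_s$ nor $-w\alpha_s$ appears in $C_c(v)$, so that the shared wall is interior to $\Cone_c(v)$ and $wD$ lies in the same cone as $w'D$; if $v'<v$, one must check that the shared wall is a common facet of $\Cone_c(v')$ and $\Cone_c(v)$, with $w\alpha_s$ and $-w\alpha_s$ appearing in $C_c(v')$ and $C_c(v)$ respectively, so that $wD$ crosses into $\Cone_c(v)$. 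The backward direction then follows from the polyhedrality of the Cambrian fan \cite[Corollary~5.15]{framework}: each chamber $wD$ lies in a unique maximal cone $\Cone_c(v)$, and the forward direction identifies this $v$ as $\pidown^c(w)$.

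The technical heart is a dictionary between a single wall crossing in the Coxeter fan and the resulting change in $\pidown^c$. To set this up I would use the recursion on $c$-sortable elements: with $c=s_1\cdots s_n$ and $s=s_1$ initial, the $c$-sortable elements $v$ split into those with $s\not\le v$, which are $(s_2\cdots s_n)$-sortable inside the parabolic $W_{\br{s}}$, and those with $s\le v$, for which $sv$ is $(s_2\cdots s_n s)$-sortable. In each case the cone $\Cone_c(v)$ and the label set $C_c(v)$ decompose compatibly with the half-spaces cut out by $\alpha_s^\perp$, and the fibers of $\pidown^c$ admit a parallel decomposition.

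The main obstacle I expect is aligning the two recursions—the combinatorial recursion on sortable elements via the initial letter of $c$ and the geometric recursion of the Cambrian fan along $\alpha_s^\perp$—and in particular handling the boundary case where sortable elements fixed or nearly fixed by $s$ straddle the two halves. Once this dictionary is established, the induction reduces to verifying that the label modification rule for $C_c$ under passage from $v'$ to $v$ exactly mirrors the incidence of the wall $(w\alpha_s)^\perp$ with $\Cone_c(v')$ and $\Cone_c(v)$, which is a case analysis of moderate length but no conceptual difficulty.
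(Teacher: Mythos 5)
First, a point of comparison: the paper does not prove this theorem at all. It is imported verbatim as \cite[Theorem~6.3]{typefree}, and the only argument the paper supplies is the parenthetical remark that the second sentence follows from the first because $\pidown^c(v)=v$ --- a step you reproduce correctly. What you are proposing, then, is a proof of the cited result, and as written it is an outline rather than a proof. The general strategy (induction, wall-crossing, the recursion on sortable elements via an initial letter of $c$) is broadly the right one --- the argument in \cite{typefree} does proceed by induction on length and rank, using the recursions $\pidown^c(w)=s\,\pidown^{scs}(sw)$ for $w\ge s$ and $\pidown^c(w)=\pidown^{sc}(w_{\br{s}})$ for $w\not\ge s$, matched against parallel recursions for $C_c(v)$ and $\Cone_c(v)$ --- but everything you flag as ``one must check'' or ``the technical heart'' is precisely where the entire content of the theorem lives, and none of it is carried out. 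The assertions that $\pm w\alpha_s\notin C_c(v)$ when $\pidown^c(ws)=\pidown^c(w)$, and that $(w\alpha_s)^\perp$ supports a common facet of $\Cone_c(v')$ and $\Cone_c(v)$ when it does not, are essentially the theorem in disguise; they require the full combinatorial machinery relating inversion sets, cover reflections, and the labels $C_c^r(v)$.

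Two concrete gaps beyond the deferred work. (1) Your backward direction invokes the polyhedrality of the Cambrian fan via \cite[Corollary~5.15]{framework}; but in the logical development of these papers the fan property in infinite type is established \emph{after}, and partly by means of, the fibers theorem you are trying to prove, so this is circular as written. Even granting the forward direction, the uniqueness you need (that $wD$ lies in $\Cone_c(v)$ for only one $c$-sortable $v$) must come from the combinatorics of the labels --- for instance from Proposition~\ref{lower walls}, which identifies the negative elements of $C_c(v)$ with cover reflections --- not from the fan statement. (2) Your base case only verifies $\Cone_c(e)=D$ and $\pidown^c(e)=e$; the biconditional at $w=e$ also requires $D\not\subseteq\Cone_c(v)$ for every $c$-sortable $v\neq e$, which again needs Proposition~\ref{lower walls} (every nonidentity $v$ has a cover reflection, hence a negative label). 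These are fixable, but as it stands the proposal is a plan for a proof, not a proof.
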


The following proposition is a special case of \cite[Corollary~3.25]{afframe}, as explained in \cite[Remark~3.26]{afframe}.
Alternately, the proposition is stated in terms of Cambrian (lattice) congruences as \cite[Proposition~1.3]{sort_camb}.

\begin{prop} \label{double finite} 
When $W$ is finite, the fans $\F_c$ and $-\F_{c^{-1}}$ are identical.
Specifically, for any $c$-sortable element $v$, we have $\Cone_c(v)=-\Cone_{c^{-1}}(v')$ where $v' = \pidown^{c^{-1}}(v w_0)$.
\end{prop}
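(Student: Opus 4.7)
The plan is to reduce the proposition to two ingredients: Theorem~\ref{pidown fibers}, which lets us describe each Cambrian cone as a union of Coxeter chambers $wD$, and the known antipodal compatibility between the $c$- and $c^{-1}$-Cambrian structures. Since the proposition is essentially the geometric incarnation of \cite[Proposition~1.3]{sort_camb}, my goal is to use that result as a black box to identify the fans and then pin down the explicit formula $v' = \pidown^{c^{-1}}(vw_0)$ by tracking a single chamber.

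First, I would set up the antipodal dictionary on Coxeter chambers. Because $W$ is finite, the Tits cone fills $V^*$ and the chambers $wD$ for $w\in W$ tile the space. The longest element $w_0$ satisfies $w_0 D = -D$, so $-wD = w(w_0 D) = (w w_0)D$ for every $w\in W$. By Theorem~\ref{pidown fibers}, we have $\Cone_c(v) = \overline{\bigcup_{\pidown^c(w) = v} wD}$, so applying the antipodal map and substituting $w' = ww_0$ gives
\[
-\Cone_c(v) \;=\; \overline{\bigcup_{\pidown^c(w'w_0)=v} w'D}.
\]

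Second, I would invoke the fan-level identity $\F_c = -\F_{c^{-1}}$. This is the geometric content of \cite[Proposition~1.3]{sort_camb} under the standard correspondence between complete fans coarsening the Coxeter fan and congruences of the weak order: the $c$-Cambrian congruence and the $c^{-1}$-Cambrian congruence are interchanged by the anti-automorphism $w \mapsto ww_0$ of the weak order, and this exchange is exactly what the antipodal map realizes on the level of fans. Alternatively, one can invoke \cite[Corollary~3.25]{afframe} as mentioned in the excerpt. Granting this identity, for each $c$-sortable $v$ there is a unique $c^{-1}$-sortable $v'$ with $-\Cone_c(v) = \Cone_{c^{-1}}(v')$.

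Third, I would pin down $v'$ by testing against a specific chamber. Since $v$ is $c$-sortable, Theorem~\ref{pidown fibers} gives $vD \subseteq \Cone_c(v)$, hence
\[
(vw_0)D \;=\; -vD \;\subseteq\; -\Cone_c(v) \;=\; \Cone_{c^{-1}}(v').
\]
Applying Theorem~\ref{pidown fibers} again, this time with $c$ replaced by $c^{-1}$ and $w$ replaced by $vw_0$, forces $\pidown^{c^{-1}}(vw_0) = v'$, as required.

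The main obstacle is the fan-level identity $\F_c = -\F_{c^{-1}}$; everything else is bookkeeping once one chamber is matched with its antipode. Since this identity is already established in \cite{sort_camb} and \cite{afframe}, I would prove the proposition by citation of those results together with the chamber-tracking argument above, rather than reconstructing the antipodal correspondence of sortable elements from scratch.
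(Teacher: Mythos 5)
Your proof is correct and matches the paper's treatment: the paper gives no proof of this proposition at all, simply citing \cite[Corollary~3.25]{afframe}, \cite[Remark~3.26]{afframe}, and \cite[Proposition~1.3]{sort_camb}, which is exactly the black box you invoke for the fan-level identity $\F_c = -\F_{c^{-1}}$. Your additional chamber-tracking step, deriving $v' = \pidown^{c^{-1}}(vw_0)$ from $vD \subseteq \Cone_c(v)$, $w_0 D = -D$, and Theorem~\ref{pidown fibers}, is a correct supplement that makes explicit what the paper leaves entirely to the citations.
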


Finally, we gather some properties of the labeling $C_c$.
Recall that $\br{r}$ is~${S\setminus\set{r}}$.  
\begin{lemma} \label{sign omit}
If $v$ is in the parabolic subgroup $W_{\br{r}}$, then $C^r_c(v) \in \Phi^{+}$.
\end{lemma}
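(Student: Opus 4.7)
The plan is to unwind the definition of $C^r_c(v)$ and reduce to a standard fact about right descents of elements of a parabolic subgroup. Fix a reduced word $s_1s_2\cdots s_n$ for $c$ and let $s_{i_1}s_{i_2}\cdots s_{i_N}$ be the $c$-sorting word for $v$. Since $v\in W_{\br{r}}$, no reduced word for $v$ contains $s_r$ (this is the standard consequence of Matsumoto's theorem that membership in a standard parabolic subgroup can be detected from any single reduced word). In particular, the $c$-sorting word omits \emph{every} occurrence of $s_r$ in $c^{\infty}$, so the first occurrence of $r$ that is skipped by $s_{i_1}\cdots s_{i_N}$ is the very first occurrence of $s_r$ in $c^\infty$, which sits at position $r$ of the initial block $s_1s_2\cdots s_n$.

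Let $s_{j_1}s_{j_2}\cdots s_{j_t}$ be the (possibly empty) initial segment of the $c$-sorting word that comes before this skip, so that $1\le j_1<j_2<\cdots<j_t<r$ and every letter $s_{j_\ell}$ lies in the first block of $c^\infty$. By the definition of $C_c^r$ given just above Lemma~\ref{cov beta part 1},
\[
C_c^r(v)\;=\;s_{j_1}s_{j_2}\cdots s_{j_t}\,\alpha_r.
\]

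It remains to show that $w:=s_{j_1}s_{j_2}\cdots s_{j_t}$ sends $\alpha_r$ to a positive root. Being a prefix of the reduced $c$-sorting word for $v$, the displayed expression for $w$ is itself reduced, and it involves no $s_r$; hence $w\in W_{\br{r}}$. Consequently $s_r$ is not a right descent of $w$, so by the standard identity $\ell(ws_r)<\ell(w)\Longleftrightarrow w\alpha_r\in\Phi^-$ we conclude $w\alpha_r\in\Phi^+$, as desired. The argument is essentially bookkeeping on the first block of the $c$-sorting word; the only conceptual input beyond the definition of $C_c^r$ is the passage from $v\in W_{\br{r}}$ to ``every reduced word for $v$ avoids $s_r$,'' and no real obstacle arises.
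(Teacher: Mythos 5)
Your proof is correct and follows essentially the same route as the paper's: both arguments locate the skip of $r$ in the first block of $c^\infty$ and then deduce positivity of $C^r_c(v)=w\alpha_r$ from $\ell(ws_r)>\ell(w)$. The only cosmetic difference is at the last step, where the paper observes that $s_{i_1}\cdots s_{i_t}s_r$ has distinct letters and is therefore reduced, while you note that $w\in W_{\br{r}}$ cannot have $s_r$ as a right descent; these are interchangeable.
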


\begin{proof}   
Suppose the $c$-sorting word $s_{i_1} s_{i_2} \cdots s_{i_N}$ for $v$ skips $r$ between $s_{i_t}$ and $s_{i_{t+1}}$.
That is, the first occurrence of $r$ in $c^\infty$ that is not in $s_{i_1} s_{i_2} \cdots s_{i_N}$ occurs between $s_{i_t}$ and $s_{i_{t+1}}$.
Then the root $C_c^r(v)$ is positive if and only if $s_{i_1} s_{i_2} \cdots s_{i_t} r$ is reduced.
If $v$ is in $W_\br{r}$, then each simple reflection occurs at most once in the word $s_{i_1} s_{i_2} \cdots s_{i_t} r$, so this word is reduced.
\end{proof}

Given a Coxeter element $c$ and $J\subseteq S$, the \newword{restriction} of $c$ to $W_J$ is the Coxeter element of $W_J$ obtained by deleting the letters in $S\setminus J$ from a reduced word for $c$. 
Typically the restriction of $c$ to $W_J$ does not equal the projection $c_J$.
The first two of the following propositions are \cite[Proposition~5.2]{typefree} and \cite[Proposition~2.30]{typefree}.
Recall that $\cov(v)$ denoted the set of cover reflections of $v$, as defined in Section~\ref{root system sec}.
\begin{prop}\label{lower walls}
Let $v$ be a $c$-sortable element.
The set of negative roots in $C_c(v)$ is $\set{-\beta_t:t\in\cov(v)}$.
\end{prop}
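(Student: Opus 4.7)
The strategy is to establish the stronger identity $C_c^r(v) = v\alpha_r$ whenever $C_c^r(v)$ is a negative root, and then read off the proposition.  Granting this identity, $C_c^r(v) < 0$ iff $v\alpha_r < 0$ iff $s_r$ is a right descent of~$v$, in which case the positive root of the corresponding cover reflection $v s_r v^{-1}$ is exactly $-v\alpha_r = -C_c^r(v)$.  The map $r\mapsto v s_r v^{-1}$ from the set $\{r : C_c^r(v)<0\}$ to $\cov(v)$ is then injective (distinct simple reflections $s_r$ yield distinct $v$-conjugates) and surjective (every $t\in\cov(v)$ equals $vs'v^{-1}$ for some right descent $s'$ of $v$).

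To prove the identity, recall from the proof of Lemma~\ref{sign omit} that $C_c^r(v) = s_{i_1}\cdots s_{i_t}\alpha_r$, where $t=t(r)$ counts the sorting-word letters preceding the first skipped occurrence of $s_r$ in $c^\infty$, and that $C_c^r(v)<0$ is equivalent to the non-reducedness of $s_{i_1}\cdots s_{i_t}s_r$.  The core combinatorial claim driving the proof is therefore: whenever $s_{i_1}\cdots s_{i_t}s_r$ is non-reduced, each of the remaining letters $s_{i_{t+1}},\ldots,s_{i_N}$ of the sorting word commutes with $s_r$.  Once this is established, the product $s_{i_{t+1}}\cdots s_{i_N}$ fixes $\alpha_r$, and hence
\[
C_c^r(v) \;=\; s_{i_1}\cdots s_{i_t}\alpha_r \;=\; s_{i_1}\cdots s_{i_N}\alpha_r \;=\; v\alpha_r,
\]
as desired.

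The plan is to prove the commutation claim by induction on $\ell(v)$, using the recursive characterization of $c$-sortable elements.  Let $s$ denote the initial letter of $c$.  In the case $s\not\leq v$, the element $v$ lies in the standard parabolic $W_{\br{s}}$ and is sortable there with respect to the restriction $c|_{\br{s}}$; the sorting words agree as sequences, so the claim reduces to the analogous statement in a smaller-rank Coxeter group.  In the case $s\leq v$, write $v = s v_1$ with $v_1$ being $(scs)$-sortable of strictly smaller length, and note that the $c$-sorting word of $v$ is $s$ concatenated with the $(scs)$-sorting word of $v_1$.  Applying the inductive hypothesis to $v_1$ and tracking how positions transform under the shift from $(scs)^\infty$ to $c^\infty$ (and how left-multiplication by $s$ affects signs of roots) transports the commutation statement from $v_1$ to $v$.

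The main obstacle is the commutation claim itself.  The heart of the matter is that the chain condition $I_1\supseteq I_2\supseteq\cdots$ on the sorting blocks, together with reducedness of the sorting word and the fact that $s_r$ was skipped precisely because including it would produce a non-reduced prefix, conspires to forbid any subsequently included letter from being adjacent to $s_r$ in the Coxeter diagram.  Verifying this requires examining, within the block where $s_r$ is first skipped, exactly which letters after the skipped position may legitimately appear, and this is the delicate step that must be handled in the recursion in Case~2.
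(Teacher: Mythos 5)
First, note that the paper does not actually prove this proposition: it is quoted verbatim from \cite[Proposition~5.2]{typefree} (where it is proved by induction on length via the recursive definitions of $c$-sortability and of $C_c$), so there is no in-paper argument to compare yours against. Judged on its own terms, your proposal has the right intuition for one inclusion but contains a genuine logical gap for the other, and it defers rather than proves its key lemma. The gap: your identity $C_c^r(v)=v\alpha_r$ is asserted only \emph{conditionally on} $C_c^r(v)$ being negative, so it yields only the implication ``$C_c^r(v)<0$ implies $v\alpha_r<0$,'' i.e.\ the inclusion of the negative part of $C_c(v)$ into $\set{-\beta_t:t\in\cov(v)}$. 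The ``iff'' you then invoke, and with it the surjectivity of $r\mapsto vs_rv^{-1}$, requires the converse: if $s_r$ is a right descent of $v$ (equivalently $v\alpha_r<0$), then $C_c^r(v)$ is negative. Nothing in your argument rules out a right descent $s_r$ for which $s_{i_1}\cdots s_{i_t}s_r$ is reduced and $C_c^r(v)$ is a positive root unrelated to $v\alpha_r$ (and indeed, when $C_c^r(v)>0$ one generally has $C_c^r(v)\neq v\alpha_r$, e.g.\ $v=s_1s_2s_3s_2$ in $S_4$ with $c=s_1s_2s_3$ gives $C_c^{s_1}(v)=\alpha_2$ while $v\alpha_1=\alpha_2+\alpha_3$, so the identity genuinely fails outside the negative case and cannot be used to run the equivalence backwards). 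You need a separate argument that every cover reflection is accounted for; a cardinality count is not available without circularity.

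Second, the ``core combinatorial claim'' (after a skip producing a non-reduced prefix, all remaining letters of the sorting word commute with $s_r$) is the entire content of the hard direction, and your proposal explicitly leaves it unestablished: the inductive step when $s\le v$ is described only as ``tracking how positions transform,'' but this is exactly where the difficulty lives. Passing from $v_1$ to $v=sv_1$ multiplies the prefix on the left by $s$, and whether $w s_r$ is reduced can change under $w\mapsto sw$ (precisely when $w\alpha_r=\pm\alpha_s$), so non-reducedness of the prefix does not automatically transfer between $v_1$ and $v$; the case $r=s$ in the inductive step is not addressed at all. I also note that the claim as stated is stronger than what you need (you only need the product $s_{i_{t+1}}\cdots s_{i_N}$ to fix $\alpha_r$, not letterwise commutation), which suggests the induction may be easier to close for the weaker statement. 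As written, the proposal is a plausible plan with one correct half-proof, not a proof.
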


\begin{prop}\label{sort para easy}
Let $J \subset S$ and let $c'$ be the restriction of $c$ to $W_J$. 
Then an element $v \in W_J$ is $c$-sortable if and only if it is $c'$-sortable. 
\end{prop}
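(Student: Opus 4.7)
The plan is to reduce everything to the definition of $c$-sortability via the $c$-sorting word, and to show that for $v \in W_J$ the $c$-sorting word and the $c'$-sorting word literally coincide (as ordered sequences of simple reflections), after which the nesting condition on blocks transfers trivially.

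First I would fix a reduced word $s_1 s_2 \cdots s_n$ for $c$ and note that deleting the letters in $S \setminus J$ yields a reduced word for $c'$. Thus $(c')^\infty$ is obtained from $c^\infty$ by deleting, within each copy of $s_1 \cdots s_n$, the letters outside $J$. The central observation is that every reduced word for $v \in W_J$ uses only letters in $J$ (the standard fact that reduced words for elements of a standard parabolic subgroup stay inside that subgroup). In particular, the $c$-sorting word for $v$ is a subword of $c^\infty$ entirely supported on $J$-positions, and hence corresponds to a subword of $(c')^\infty$ spelling $v$.

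The next step is lex-order preservation. Given two subwords of $c^\infty$ spelling $v$, supported on $J$-positions $p_1 < \cdots < p_N$ and $p'_1 < \cdots < p'_N$ respectively, let $q_k$ and $q'_k$ denote their positions in $(c')^\infty$, obtained by counting only $J$-positions of $c^\infty$. If the two sequences first disagree at index $k$ with $p_k < p'_k$, then $q_1,\ldots,q_{k-1}$ and $q'_1,\ldots,q'_{k-1}$ agree, and since the enumeration of $J$-positions preserves order, $q_k < q'_k$. Thus lex-leftmost in $c^\infty$ is equivalent to lex-leftmost in $(c')^\infty$, so the $c$-sorting word for $v$ coincides with the $c'$-sorting word for $v$ as a sequence of simple reflections (they only differ in how their positions are indexed).

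Finally, the $r$-th block $I_r$ of the $c$-sorting word consists of letters taken from the $r$-th copy of $s_1 \cdots s_n$ in $c^\infty$. Since every such letter lies in $J$, $I_r$ is equally describable as the set of letters taken from the $r$-th copy of $c'$ in $(c')^\infty$, i.e.\ $I_r = I'_r$. Therefore the nesting $I_1 \supseteq I_2 \supseteq \cdots$ holds if and only if $I'_1 \supseteq I'_2 \supseteq \cdots$ holds, proving $v$ is $c$-sortable if and only if it is $c'$-sortable. The main (and essentially only) conceptual point is the lex-order preservation argument; everything else is bookkeeping.
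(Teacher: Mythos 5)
Your proof is correct, and it is essentially the standard argument underlying the result the paper does not prove in-text but simply cites as \cite[Proposition~2.30]{typefree}: since every reduced word for $v \in W_J$ uses only letters of $J$, the order-preserving bijection between $J$-supported subwords of $c^\infty$ and subwords of $(c')^\infty$ shows the $c$-sorting word and the $c'$-sorting word of $v$ coincide, and the block-nesting condition transfers. There is no separate proof in the paper to compare against, and your lex-order-preservation step is the right (and only nontrivial) point to make explicit.
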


\begin{prop}\label{Cc para}
Let $J \subset S$ and let $c'$ be the restriction of $c$ to $W_J$. 
If $v \in W_J$ is $c$-sortable, then $C_{c'}^r(v)=C_c^r(v)$ for all $r\in J$.  
Thus ${C_{c'}(v)=\set{C_c^r(v):r\in J}\subseteq C_c(v)}$.
\end{prop}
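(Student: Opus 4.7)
The plan is to show that when $v \in W_J$, the $c$-sorting word for $v$ coincides, as a sequence of letters, with the $c'$-sorting word for $v$, and then to verify that the recipe for $C_c^r(v)$ produces the same vector as the recipe for $C_{c'}^r(v)$ for every $r \in J$.

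First I would observe that since $v \in W_J$, any reduced word for $v$ — in particular the $c$-sorting word $s_{i_1} s_{i_2} \cdots s_{i_N}$ — uses only simple reflections in $J$. This is a standard fact from Coxeter group theory (a consequence of the exchange/deletion condition, or equivalently of the fact that all reduced words for $v$ are related by braid moves, which preserve the multiset of letters).

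Next, deleting the non-$J$ letters from $c^\infty$ yields $(c')^\infty$. Since the $c$-sorting word uses only $J$-letters, it appears naturally as a subword of $(c')^\infty$. I would argue it is in fact the $c'$-sorting word for $v$: any reduced expression for $v$ that were lexicographically earlier as a subword of $(c')^\infty$ would lift to a lexicographically earlier subword of $c^\infty$ (since the positions of $J$-letters within $c^\infty$ retain their relative order when we pass to $(c')^\infty$), contradicting minimality of the $c$-sorting word. Hence the two sorting words are identical as sequences of reflections.

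Finally, fix $r \in J$ and suppose the $c$-sorting word skips $r$ at the $m$\th{} occurrence of $r$ in $c^\infty$, between $s_{i_t}$ and $s_{i_{t+1}}$. Since only $J$-letters are ever selected, and the relative ordering of occurrences of $r$ is preserved when passing from $c^\infty$ to $(c')^\infty$, the skipped occurrence in $(c')^\infty$ is likewise the $m$\th{} occurrence of $r$, and the prefix of selected letters preceding it is still $s_{i_1} s_{i_2} \cdots s_{i_t}$. Therefore
\[
C_c^r(v) \;=\; s_{i_1} s_{i_2} \cdots s_{i_t}\, \alpha_r \;=\; C_{c'}^r(v).
\]
The second assertion follows immediately since, by definition, $C_{c'}(v) = \set{C_{c'}^r(v) : r \in J}$. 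The main (minor) obstacle is conceptual rather than computational: one has to carefully identify the ``skipped occurrence'' of $r$ across the two infinite words $c^\infty$ and $(c')^\infty$, and verify that the embedding of the $c$-sorting word into $(c')^\infty$ really is lex-minimal. Once these bookkeeping issues are settled, everything else is immediate from the definitions.
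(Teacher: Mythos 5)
Your proof is correct and follows essentially the same route as the paper, whose entire argument is the observation that the $c'$-sorting word for $v$ coincides with the $c$-sorting word and hence skips each $r\in J$ in the same place. Your write-up simply supplies the bookkeeping (the order-preserving correspondence between $J$-positions in $c^\infty$ and positions in $(c')^\infty$) that the paper leaves implicit.
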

\begin{proof}
Proposition~\ref{sort para easy} implies that the notation $C_{c'}(v)$ makes sense.
The $c'$-sorting word for $v$ coincides with the $c$-sorting word for $v$, and thus skips each $r\in J$ in the same place.
\end{proof}

We conclude this section with a lemma about $c$-sortable elements in type A. 
\begin{lemma}\label{easy lem}
Suppose $W$ is of type $A_k$, with vertices numbered in order along the Coxeter diagram, and let $c$ be the Coxeter element $s_1\cdots s_k$.
If $v\in W$ is $c$-sortable and has full support, then $v=cu$ where $u$ is a $c'$-sortable element in $W_\br{s_k}$ for $c'=s_1\cdots s_{k-1}$.
\end{lemma}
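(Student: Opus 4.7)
The plan is to analyze the block structure $I_1\supseteq I_2\supseteq\cdots$ of the $c$-sorting word of $v$ and reduce the lemma to an explicit non-reducedness argument in the symmetric group.  The steps are (1) use full support to force $I_1=S$, (2) extract the factorization $v=cu$ and rephrase the lemma as $u\in W_{\br{s_k}}$, (3) carry out the main step ruling out $s_k\in I_2$, and (4) invoke Proposition~\ref{sort para easy} to conclude.

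For (1), since $v$ has full support, every $s_i$ appears in some $I_r$, and the nesting $I_1\supseteq I_r$ forces $s_i\in I_1$, so $I_1=S$. Hence the first block of $v$'s sorting word spells out $c=s_1s_2\cdots s_k$, giving the reduced factorization $v=cu$ in which $u$ is $c$-sortable with sorting blocks $I_2\supseteq I_3\supseteq\cdots$. For (2), it suffices to show that $s_k$ appears in no $I_r$ with $r\ge2$, and by the nesting $I_r\subseteq I_2$ this further reduces to $s_k\notin I_2$.

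The main obstacle is step (3). Suppose for contradiction that $s_k\in I_2$; then $I_2=\set{s_{a_1},\ldots,s_{a_{m-1}},s_k}$ with $a_1<\cdots<a_{m-1}<k$, and the first two blocks give a reduced word of length $k+m$ for $cw_1$, where $w_1=s_{a_1}\cdots s_{a_{m-1}}s_k$.  I would derive a contradiction by working in the permutation realization $W=S_{k+1}$, in which $c$ acts as the $(k+1)$-cycle $1\mapsto 2\mapsto\cdots\mapsto k+1\mapsto 1$. The element $w_1s_k=s_{a_1}\cdots s_{a_{m-1}}$ lies in $W_{\br{s_k}}$ and therefore fixes $k+1$, which forces $w_1(k)=k+1$. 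Since $w_1$ is a bijection, $w_1(k+1)\le k$, and so $(cw_1)(k)=c(k+1)=1<w_1(k+1)+1=(cw_1)(k+1)$, meaning $s_k$ is not a right descent of $cw_1$.  On the other hand $cw_1\cdot s_k=c\cdot s_{a_1}\cdots s_{a_{m-1}}$ has length at most $k+m-1<\ell(cw_1)$, which forces $s_k$ to be a right descent of $cw_1$---the desired contradiction. Once $u\in W_{\br{s_k}}$ is established, Proposition~\ref{sort para easy} gives that $u$ is $c'$-sortable for $c'=s_1\cdots s_{k-1}$, completing the proof.
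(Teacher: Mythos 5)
Your proof is correct, but the key step is genuinely different from the one in the paper. Both arguments begin the same way: full support forces the first block of the $c$-sorting word to be all of $S$, giving the reduced factorization $v=cu$ with $u$ $c$-sortable, and both finish by invoking Proposition~\ref{sort para easy}. The difference is in how one shows $u\in W_{\br{s_k}}$. The paper observes that $\ell(cu)=\ell(c)+\ell(u)$ forces $u\le c^{-1}w_0$ in the weak order, and then computes that $c^{-1}w_0$ is exactly the longest element of $W_{\br{s_k}}$, so $u$ lies in that parabolic; this is a two-line argument, but it leans on the existence of $w_0$ and on identifying $c^{-1}w_0$. You instead rule out $s_k\in I_2$ directly: assuming $s_k\in I_2$, the prefix $c\,w_1$ of the sorting word is reduced of length $k+m$, so $s_k$ must be a right descent of $cw_1$ (since $cw_1s_k=cs_{a_1}\cdots s_{a_{m-1}}$ is shorter), while the permutation computation $(cw_1)(k)=1<(cw_1)(k+1)$ shows it is not. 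I checked the details --- $w_1s_k\in W_{\br{s_k}}$ fixes $k+1$, hence $w_1(k)=k+1$ and $w_1(k+1)\le k$, and the descent criterion is applied correctly --- so the contradiction is sound. Your route is longer but more self-contained: it never mentions $w_0$ and uses only the block structure of sorting words plus an elementary descent count, whereas the paper's route is shorter and exhibits the cleaner structural fact that the $c$-sortable elements of full support are precisely $c$ times the sortable elements of the weak-order interval $[e,c^{-1}w_0]=W_{\br{s_k}}$.
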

\begin{proof}
By definition of $c$-sortability, $v=cu$ for some $c$-sortable element $u$ of $W$ with $\ell(v) = \ell(c) + \ell(u)$.
So $u \leq c^{-1} w_0$. One computes that $c^{-1} w_0$ is the longest element in $W_{\br{s_k}}$, so $u \in W_{\br{s_k}}$. 
The result follows by Proposition~\ref{sort para easy}.
\end{proof}

\section{Cambrian combinatorics with cycles} \label{cycle defn sec}   
The construction of the Cambrian framework (and the ``doubled'' Cambrian framework described in the introduction) relied crucially on the hypothesis that $B$ is acyclic.
However, the object of study in this paper is a non-acyclic exchange matrix: the oriented cycle.
In~\cite{cyclic}, the notion of sortable elements was generalized to allow for non-acyclic orientations of the Coxeter diagram of $W$.
In this section, we review the main results of~\cite{cyclic}.

Let $B$ be an $n \times n$ skew-symmetrizable integer matrix,
 $W$ the corresponding Weyl group and $S$ the simple generators.  
Let $\Omega$ be the directed graph with vertex set $S$ and edges $s_i \from s_j$ whenever $b_{ij} > 0$.
For a subset $J$ of $S$, we say that $J$ is \newword{$\Omega$-acyclic} if the subgraph of $\Omega$ induced by $J$ is acyclic.
If $J$ is $\Omega$-acyclic, we write $c(\Omega, J)$ for the product of the elements $\{ s_i \}_{i \in J}$, ordered with $s_i$ before $s_j$ if there is an edge $s_i \from s_j$.  
This is a Coxeter element of $W_J$.

For $w \in W$, define $J(w)$
to be the unique minimal subset $J$ 
of $S$ so that $w$ is in the parabolic subgroup $W_J$.
We define $w$ to be \newword{$\Omega$-sortable} if $J(w)$ is $\Omega$-acyclic and $w$ is $c(\Omega, J)$-sortable.
In~\cite{cyclic}, we define an idempotent map $\pidown^{\Omega}$ from $W$ to the $\Omega$-sortable elements, mapping $w\in W$ to the unique maximal $\Omega$-sortable element below $w$. 
We restate the defining property of $\pidown^{\Omega}$ as a lemma:
\begin{lemma}\label{helpful}
If $x$ is $\Omega$-sortable and $y\in W$, then $x\le y$ if and only if $x\le\pidown^\Omega(y)$.
\end{lemma}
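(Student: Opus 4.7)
The plan is to prove this as an essentially immediate consequence of the defining property of $\pidown^\Omega$ recalled from \cite{cyclic}, reading that defining property as a Galois-style biconditional. Specifically, $\pidown^\Omega(y)$ is declared to be the unique maximal $\Omega$-sortable element below $y$, and we interpret ``unique maximal'' in the strong sense of a maximum element: an $\Omega$-sortable element below $y$ that dominates every other $\Omega$-sortable element below $y$ in weak order.

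For the forward implication, suppose $x \le \pidown^\Omega(y)$. By the construction of $\pidown^\Omega$ summarized just before the lemma, $\pidown^\Omega(y)$ is an $\Omega$-sortable element weakly below $y$, so in particular $\pidown^\Omega(y) \le y$. Transitivity of the weak order then gives $x \le y$; no $\Omega$-sortability hypothesis on $x$ is needed here.

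For the backward implication, suppose $x$ is $\Omega$-sortable and $x \le y$. Then $x$ lies in the set of $\Omega$-sortable elements weakly below $y$. By the defining property from \cite{cyclic}, this set has $\pidown^\Omega(y)$ as its maximum, so $x \le \pidown^\Omega(y)$, which is the required conclusion.

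The hard part is not in this lemma but in \cite{cyclic}, where one must verify that in the non-acyclic setting the set of $\Omega$-sortable elements weakly below an arbitrary $y \in W$ genuinely has a maximum (rather than merely one or more maximal elements). Once that has been established, the biconditional above is an essentially tautological reformulation that is convenient for the subsequent arguments of this paper.
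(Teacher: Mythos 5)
Your proposal is correct and matches the paper's treatment: the paper gives no separate proof of Lemma~\ref{helpful}, explicitly presenting it as a restatement of the defining property of $\pidown^\Omega$ from \cite{cyclic}, which is exactly the unpacking you carry out (with the forward direction by transitivity and the backward direction by the maximum property). You are also right that the substantive content --- the existence of a genuine maximum among $\Omega$-sortable elements below $y$ in the non-acyclic setting --- is established in \cite{cyclic}, not here.
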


We also show that the $\Omega$-sortable elements and the map $\pidown^{\Omega}$ have the same lattice-theoretic properties as in the acyclic case.  
The \newword{$\Omega$-Cambrian semilattice} is the subposet of the weak order on $W$ induced on the $\Omega$-sortable elements.
We define the graph $\Camb_{\Omega}$ to be the Hasse diagram of the $\Omega$-Cambrian semilattice.

Let $v \in W$ be $\Omega$-sortable and let $r$ be a simple reflection of $W$. If $J(v) \cup \{ r \}$ is $\Omega$-acyclic, we define $C_{\Omega}^r(v)$ to be $C_{c(\Omega, J(v) \cup \{ r \} )}^r(v)$.  
If $J(v) \cup \{ r \}$ is not $\Omega$-acyclic, we leave $C_{\Omega}^r(v)$ undefined.  
Define $C_{\Omega}(v) = \{ C_{\Omega}^r(v) : \ J(v) \cup \{ r \}\ \mbox{is\ $\Omega$-acyclic} \}$. 
In particular, $C_{\Omega}(v)$ can have fewer than $n$ elements.
When $\Omega$ is acyclic, the set $C_{\Omega}(v)=\set{C_{\Omega}^r(v) :r\in S}$ has $n$ elements for every $v$, and furthermore, setting $c=c(\Omega,S)$, we have $C_\Omega(v)=C_c(v)$.
We will repeatedly use the following lemma: 
\begin{lemma} \label{observe}  
If $v$ is an $\Omega$-sortable element of $W$, and $v \in W_J$ for some $\Omega$-acycilc $J \subseteq S$, then $C_{c(\Omega,J)}(v) \subset \C_\Omega(v)$.
\end{lemma}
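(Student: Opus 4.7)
The plan is to fix an arbitrary $r \in J$ and show $C_{c(\Omega,J)}^r(v) \in C_\Omega(v)$ by identifying this label with $C_\Omega^r(v)$. First, well-definedness of $C_\Omega^r(v)$ is automatic: since $J(v) \subseteq J$ by minimality of $J(v)$, the set $J(v) \cup \{r\}$ sits inside the $\Omega$-acyclic set $J$ and is therefore itself $\Omega$-acyclic, so by definition $C_\Omega^r(v) = C_{c(\Omega, J(v) \cup \{r\})}^r(v)$ is defined.

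Next, I would work entirely inside $W_J$, taking as Coxeter element $c := c(\Omega, J)$. The heart of the argument is an application of Proposition~\ref{Cc para} with the parabolic $W_{J(v) \cup \{r\}} \subseteq W_J$. The hypothesis about restricting the Coxeter element holds directly: deleting the letters indexed by $S \setminus (J(v) \cup \{r\})$ from the $\Omega$-ordered product defining $c(\Omega, J)$ yields exactly the $\Omega$-ordered product defining $c(\Omega, J(v) \cup \{r\})$. Since $v \in W_{J(v)} \subseteq W_{J(v) \cup \{r\}}$, the proposition then gives $C_{c(\Omega, J)}^r(v) = C_{c(\Omega, J(v) \cup \{r\})}^r(v) = C_\Omega^r(v)$, as required.

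The only remaining gap is that Proposition~\ref{Cc para} requires $v$ to be $c$-sortable in $W_J$. To verify this I would invoke Proposition~\ref{sort para easy} on the even smaller parabolic $W_{J(v)} \subseteq W_J$: the restriction of $c = c(\Omega, J)$ to $W_{J(v)}$ is, by the same letter-deleting bookkeeping, $c(\Omega, J(v))$, and $v$ is $c(\Omega, J(v))$-sortable by the very definition of $\Omega$-sortability. There is no serious obstacle in this argument; the lemma is essentially a matter of aligning the three Coxeter elements $c(\Omega, J)$, $c(\Omega, J(v) \cup \{r\})$, and $c(\Omega, J(v))$ with the correct parabolic restriction results, and the whole proof fits into a few lines once one commits to a consistent notation for which subgroup one is working in.
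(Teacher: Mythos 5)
Your proof is correct and follows essentially the same route as the paper's: the key step in both is to observe that $J(v)\cup\{r\}\subseteq J$ and then apply Proposition~\ref{Cc para} to identify $C^r_{c(\Omega,J)}(v)$ with $C^r_{c(\Omega,J(v)\cup\{r\})}(v)=C^r_\Omega(v)$. You simply spell out the supporting details (acyclicity of $J(v)\cup\{r\}$, sortability of $v$ with respect to $c(\Omega,J)$ via Proposition~\ref{sort para easy}) that the paper leaves implicit.
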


\begin{proof} 
For any $r\in J$, we have $J(v)\cup\set{r}\subseteq J$, so Proposition~\ref{Cc para} says that $C^{r}_{c(\Omega,J)}(v) = C^{r}_{c(\Omega,J(v) \cup \{ r \})}(v) \in C_{\Omega}(v)$. 
\end{proof}

If $e = (v', v)$ is an edge of the Hasse diagram $\Camb_\Omega$, 
corresponding to a cover $v' \covered v$, then we can apply Lemma~\ref{cov beta part 1} within standard parabolic subgroups to conclude that there exists a unique root $\beta$ such that $\beta\in C_\Omega(v')$ and $-\beta\in C_\Omega(v)$, and that $\beta$ is positive.
We thus label the incident pairs $(v,e)$ and $(v',e)$ by $\pm \beta$ as in the acyclic case. 
We can add half-edges to $\Camb_{\Omega}$ to make an $n$-regular quasi-graph as in the acyclic case, but some vertices do not have enough labels to put on the added half-edges.
Thus we cannot define a framework when $\Omega$ is acyclic without some additional information. 

Each $\Omega$-sortable element $v$ defines a cone 
\[\Cone_{\Omega}(v) := \{ x \in V^{\ast} : \langle x, \beta \rangle \geq 0,\ \beta \in C_{\Omega}(v) \}.\] 
When $\Omega$ is not acyclic, these cones are not the maximal cones of a fan; they do not even have the same lineality space.
However, it is still true \cite[Theorem~4.1]{cyclic} that, for any $\Omega$-sortable $v$ and any $w \in W$, we have $\pidown^{\Omega}(w) = v$ if and only if $wD \subseteq \Cone_{\Omega}(v)$.

\section{Constructing a framework for the oriented cycle} \label{main construction sec}
In this section, we give the construction of $(\DCamb_\Omega, \DC_\Omega)$.
The proof that it is a framework is given in Section~\ref{proofs sec}.
We begin by illustrating our construction for $n=3$.

\begin{example}
Consider the Coxeter group of type $\tilde{A}_2$.
The diagram of this Coxeter group is a triangle.
Let $\Omega$ be the orientation $s_1 \from s_2 \from s_3 \from s_1$.   
There are ten $\Omega$-sortable elements. The intersection of the cones $\Cone_{\Omega}(v)$ with
$V_1^*:=\set{x:\br{x,\delta}=1}$
are drawn with dark edges on the left side of Figure~\ref{cyclic1}. 
\begin{figure}[h] \begin{center}
\scalebox{.87}{\includegraphics{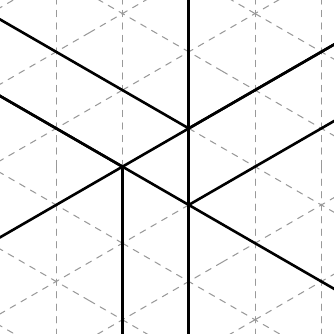}
\begin{picture}(0,0)(80,-80)
\put(-6,-1.5){\small$e$}
\put(-18,16.5){\small$s_1$}
\put(-16,36){\small$s_1s_2$}
\put(9,41){\small$s_1s_2s_1$}
\put(14,-1.5){\small$s_2$}
\put(18,-20.5){\small$s_2s_3$}
\put(9,-42){\small$s_2s_3s_2$}
\put(-18,-20.5){\small$s_3$}
\put(-47,-20.5){\small$s_3s_1$}
\put(-63,-1.5){\small$s_3s_1s_3$}
\end{picture}}
\qquad
\scalebox{.87}{\includegraphics{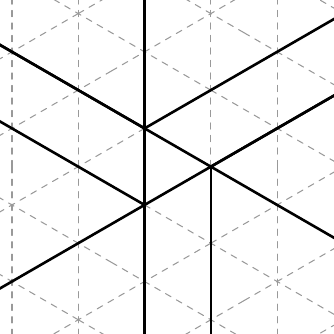}
\begin{picture}(0,0)(80,-80)
\put(-6,-1.5){\small$e$}
\put(1,17){\small$s_3$}
\put(-11,35.5){\small$s_3s_2$}
\put(-47,41){\small$s_3s_2s_3$}
\put(-30,-1.5){\small$s_2$}
\put(21,-20.5){\small$s_1s_3$}
\put(25,-1.5){\small$s_1s_3s_1$}
\put(1,-20.5){\small$s_1$}
\put(-44,-20.5){\small$s_2s_1$}
\put(-46,-45){\small$s_2s_1s_2$}
\end{picture}}
\end{center}
\caption{$\Omega$-sortable elements, $(-\Omega)$-sortable elements, and their cones}
\label{cyclic1}
\end{figure}
The gray dotted lines indicate the decomposition of the Tits cone into cones $wD$.
The boundary of $\Tits(A)$ is not represented in Figure~\ref{cyclic1}, but corresponds to the ``circle at infinity'' in each picture.
There are six $\Omega$-sortable elements for which $\Cone_{\Omega}(v) \cap \partial \Tits(A)$ is $(n-1)$-dimensional (i.e.\ $2$-dimensional).

The right picture of Figure~\ref{cyclic1} is the analogous, but antipodal, picture for the opposite orientation $-\Omega$.
Once again, there are six $(-\Omega)$-sortable elements for which $-\Cone_{-\Omega}(v) \cap \partial \Tits(A)$ is $(n-1)$-dimensional.
Moreover, these two sets of six three-dimensional cones intersect $\partial \Tits(A)$ in the same six two-dimensional cones.
Both pictures are drawn from the point of view of a point in $\Tits(A)$ lying on the other side of $V_1^*$ from the origin.

We glue the graphs $\Camb_\Omega$ and $\Camb_{-\Omega}$ together by identifying the $\Omega$-sortable element $v_1$ with the $(-\Omega)$-sortable element $v_2$ if $\Cone_{\Omega}(v_1) \cap \partial \Tits(A)={- \Cone_{- \Omega}(v_2) \cap \partial \Tits(A)}$. 
Writing $v$ for the resulting vertex, we set $\DC_\Omega(v) := C_{\Omega}(v_1) \cup - C_{-\Omega}(v_2)$.  
The corresponding operation in the dual space is to assign the cone $\Cone_{\Omega}(v_1) \cap - \Cone_{-\Omega}(v_2)$ to $v$.
These cones form the maximal cones of a fan $\DF_{\Omega}$, pictured in Figure~\ref{cyclic2}. We draw codimension-$1$ faces in the closure of $\Tits(A)$ in black, codimension-$1$ faces contained in the closure of $-\Tits(A)$ in blue, and codimension-$1$ faces that cross $\partial\Tits(A)$ in dotted blue and black. 
The red circle indicates $\partial \Tits$.
This will be the fan of our framework for the cyclically oriented triangle.
\begin{figure}[h] 
\begin{center}
\definecolor{myblue}{rgb}{0.3,0.3,1}
\begin{picture}(0,0)
\end{picture}
\scalebox{0.99}{\includegraphics{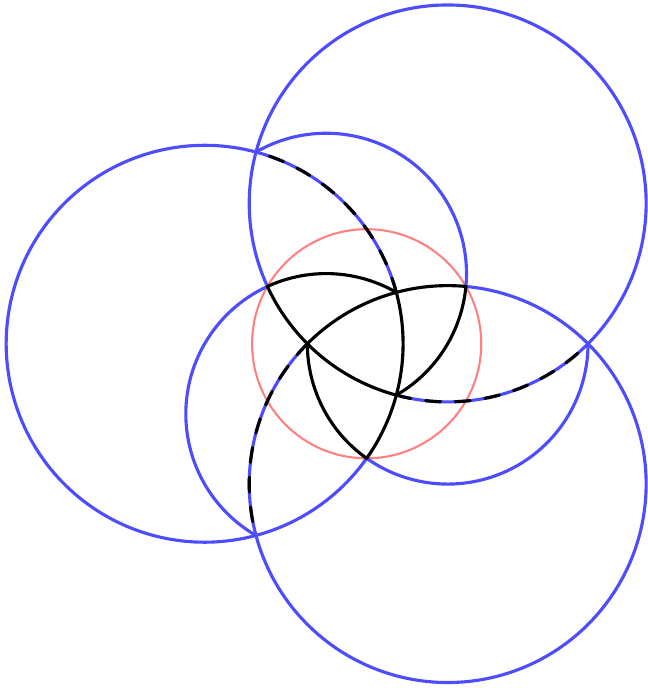}
\begin{picture}(0,0)(135,-165)
\put(-6.5,-1){$e$}
\put(-32,20){$s_1$}
\put(-56,61){\textcolor{myblue}{$-s_3s_2s_3$}}
\put(-42,49){$=s_1s_2$}
\put(-23,80){\textcolor{myblue}{$-s_3s_2$}}
\put(-11,68){$=s_1s_2s_1$}
\put(21,7){$s_2$}
\put(38,0){$s_2s_3$=}
\put(41,-12){\textcolor{myblue}{$-s_1s_3s_1$}}
\put(12,-43){$s_2s_3s_2$=\textcolor{myblue}{$-s_1s_3$}}
\put(-14,-34){$s_3$}
\put(-56,-49){$s_3s_1=$}
\put(-52,-61){\textcolor{myblue}{$-s_2s_1s_2$}}
\put(-81,-3){$s_3s_1s_3=$}
\put(-76,-14){\textcolor{myblue}{$-s_2s_1$}}
\put(35,-110){\textcolor{myblue}{$-s_1$}}
\put(-130,22){\textcolor{myblue}{$-s_2$}}
\put(60,90){\textcolor{myblue}{$-s_3$}}
\put(-120,130){\textcolor{myblue}{$-e$}}
\end{picture}}
\end{center}
\caption{The fan $\DF_\Omega$}
\label{cyclic2}
\end{figure}
\end{example}

\begin{remark}\label{Shi remark}  
If all walls in Figure~\ref{cyclic1} are extended to lines, the result is a drawing of the Shi arrangement of type $A_2$, first studied in \cite{Shi}.
Indeed, one easily deduces from the results of this paper that 
the roots appearing as $c$-vectors for the oriented $n$-cycle are exactly the roots of $\tilde A_{n-1}$ that define the Shi arrangement of type $A_{n-1}$.
\end{remark}

We now show how the above example generalizes to the cyclicly oriented $n$-cycle for $n>3$. From now on, $\Omega$ is the oriented graph $s_1 \from s_2 \from \cdots \from s_n \from s_1$ and $-\Omega$ is the reverse orientation.
Indices should always be treated as cyclic modulo $n$.

In Section~\ref{sec pairing},
we will prove the following proposition.
\begin{prop} \label{pairing summary}
For every $\Omega$-sortable $v$, the set of roots $C_{\Omega}(v)$ contains either $n$ or $n-1$ elements. There is a bijection 
\[ \eta : \{ v \in \Camb_{\Omega} : |C_{\Omega}(v)| = n-1 \} \to  \{ v' \in \Camb_{-\Omega} : |C_{-\Omega}(v')| = n-1 \}  \]
such that $C_{\Omega}(v) \cup - C_{- \Omega}(\eta(v))$ contains $n$ roots.
\end{prop}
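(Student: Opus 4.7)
The first claim, that $|C_\Omega(v)|\in\{n-1,n\}$, is a direct observation: for the oriented $n$-cycle $\Omega$, the only non-$\Omega$-acyclic subset of $S$ is $S$ itself. Thus for an $\Omega$-sortable $v$, which has $J(v)\subsetneq S$, the vector $C_\Omega^r(v)$ is defined precisely when $J(v)\cup\{r\}\neq S$. This fails for at most one $r$, namely $r=r_0$ when $J(v)=S\setminus\{r_0\}$, giving $|C_\Omega(v)|=n-1$ in that case and $|C_\Omega(v)|=n$ otherwise.

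For the bijection, the key observation is that an $\Omega$-sortable $v$ with $J(v)=S\setminus\{r_0\}$ is precisely a full-support $c_0$-sortable element of the finite type $A_{n-1}$ parabolic $W_{S\setminus\{r_0\}}$, where $c_0=c(\Omega,S\setminus\{r_0\})$. Lemma~\ref{easy lem} provides an inductive parameterization of these elements, writing $v=c_0\cdot u$ for some sortable $u$ in a rank-$(n-2)$ sub-parabolic. Following the gluing criterion described in the introduction and the $n=3$ example, the natural definition of $\eta(v)$ is the unique $(-\Omega)$-sortable element $v'$ with $|C_{-\Omega}(v')|=n-1$ satisfying the geometric condition $\Cone_\Omega(v)\cap\partial\Tits(A)=-\Cone_{-\Omega}(v')\cap\partial\Tits(A)$. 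The $n=3$ example suggests (and I would expect in general) that $J(v')=S\setminus\{r_0'\}$ for some cyclic shift $r_0'$ of $r_0$, so that $v'$ sits in a \emph{different} parabolic than $v$.

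To establish existence and uniqueness of $v'$, I would analyze $\Cone_\Omega(v)\cap\partial\Tits(A)$. Since $|C_\Omega(v)|=n-1$ and I would first show that these $n-1$ labels are linearly independent modulo $\mathbb{R}\delta$, the intersection is an $(n-1)$-dimensional simplicial cone inside the $(n-1)$-dimensional subspace $\delta^\perp\subset V^*$. As $v$ ranges over $\Omega$-sortable elements with $|C_\Omega(v)|=n-1$, these cones are expected to tile $\delta^\perp$; the analogous tiling by $-\Cone_{-\Omega}(v')\cap\partial\Tits(A)$ for $(-\Omega)$-sortable $v'$ then produces the bijection $\eta$. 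To identify the matching cones on each side, I would work inside each finite parabolic $W_{S\setminus\{r_0\}}$ and leverage Proposition~\ref{double finite} as the finite-type engine that swaps $c_0$-sortable combinatorics with $c_0^{-1}$-sortable combinatorics via antipodal cones.

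The most delicate part is the label condition $|C_\Omega(v)\cup -C_{-\Omega}(\eta(v))|=n$. Since both label sets have $n-1$ elements, this is equivalent to the statement that they share exactly $n-2$ common roots, with the remaining two contributing the missing direction. I would attack this by using Lemma~\ref{observe} to realize $C_\Omega(v)$ as containing the $c(\Omega,J)$-labels of $v$ for a well-chosen $\Omega$-acyclic $J$, and doing the analogous expansion on the $-\Omega$ side for $\eta(v)$, so that both label sets are expressible in terms of the inductive parameterization from Lemma~\ref{easy lem}. Explicitly matching the $n-2$ shared roots via detailed comparison of the two sorting words---and showing that the remaining pair of roots, together with the shared ones, forms a basis of the root lattice---is the technical heart of the argument, and is presumably why the paper dedicates Sections~\ref{sec pairing} and~\ref{long computation} to proving this proposition.
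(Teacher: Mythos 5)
Your handling of the first claim is correct and is exactly the paper's: since the only non-$\Omega$-acyclic subset of $S$ is $S$ itself, $C^r_\Omega(v)$ is undefined for at most one $r$, and Proposition~\ref{TwoTypes} records precisely this dichotomy. You have also correctly located the main tools for the rest (Lemma~\ref{easy lem}, Proposition~\ref{double finite}, Lemma~\ref{observe}, and the reduction to ``$n-2$ shared labels plus two distinct extras''). However, the two substantive claims are left at the level of ``I would,'' and in one place the plan as stated is circular.

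The circularity is in your definition of $\eta$. You define $\eta(v)$ as the unique $(-\Omega)$-sortable $v'$ with $\Cone_\Omega(v)\cap\partial\Tits(A)=-\Cone_{-\Omega}(v')\cap\partial\Tits(A)$, and you ground existence and uniqueness in the assertion that the cones $\Cone_\Omega(v)\cap\delta^\perp$ tile $\delta^\perp$ and that the $-\Omega$ side produces the same tiling. That assertion is essentially equivalent to the proposition being proved, and you offer no argument for it. The paper avoids this entirely by defining $\eta$ combinatorially: for $v\in U_\Omega(s_i)$, write $v=c[i+1,i+n-1]\,u$ via Lemma~\ref{easy lem}, let $u'$ be the finite-type antipodal partner of $u$ supplied by Proposition~\ref{double finite}, and set $\eta(v)=c[i+n,i+2]\,(u')^{++}$. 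Bijectivity is then immediate as a composite of four bijections (Proposition~\ref{eta bij}), and the boundary-cone matching is a \emph{consequence} proved afterwards (Proposition~\ref{challenging}), not a definition. Note the index shift $U_\Omega(s_i)\to U_{-\Omega}(s_{i+1})$, which your ``cyclic shift'' guess anticipates but does not pin down.

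For the cardinality claim, the actual work is: (i) a conjugation computation (Lemma~\ref{ge n-2}), using $c[i+1,i+n-1]\alpha_j=\alpha_j^+$, showing that the $n-2$ labels away from the two ``seams'' literally coincide; and (ii) an explicit computation of the two remaining labels (Lemma~\ref{Funny Roots}) showing one is positive, one is negative (hence they are distinct), and that they differ by $\delta$ --- which is also what yields the boundary-cone statement. Step (ii) rests on the identity $g'=g-1$ relating the first blocks of the sorting words of $u$ and $u'$, proved in Section~\ref{long computation} by locating $u$ and $u'$ in cosets of maximal parabolics of $S_k$. None of this computation appears in your proposal beyond the (accurate) remark that it is the technical heart. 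A minor point: the proposition asserts only that the union has $n$ elements, not that it is a basis of the root lattice, so distinctness of the two extra roots is all that is needed at this stage.
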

There is a geometric description of this pairing: $|C_{\Omega}(v)| = n-1$ if and only if $\Cone_{\Omega}(v)$ has $n-1$ dimensional intersection with $\partial \Tits(A)$ and, in this case, $\Cone_{\Omega}(v) \cap \partial \Tits(A) = - \Cone_{-\Omega}(\eta(v))\cap \partial \Tits(A)$.

Let $\DCamb_\Omega$ be obtained from the disjoint union $\Camb_\Omega\cup(-\Camb_{-\Omega})$ by identifying each $v\in\Camb_\Omega$
having $|C_{\Omega}(v)| = n-1$ with $-\eta(v)\in -\Camb_{-\Omega}$.
We define a labeling $\DC_{\Omega}$ of $\DCamb_{\Omega}$ as follows: If $v$ is $\Omega$-sortable with $|C_{\Omega}(v)| = n$ then $\DC_{\Omega}(v) = C_{\Omega}(v)$. If $v'$ is $-\Omega$-sortable with $|C_{-\Omega}(v') | = n$, then $\DC_{\Omega}(v') = - C_{-\Omega}(v')$. 
If $v$ is $\Omega$-sortable with $|C_{\Omega}(v)| = n-1$, then $\DC_{\Omega}(v) = C_{\Omega}(v) \cup (- C_{-\Omega}(\eta(v)))$.
We have described $\DC_\Omega$ in terms of the label sets $\DC_\Omega(v)$.
It is straightforward now to work out labels on incident pairs.

In Sections~\ref{sec pairing} 
and~\ref{long computation}, we elaborate on and prove Proposition~\ref{pairing summary}.
Then we continue on to the proof of Theorem~\ref{main theorem}.

\section{The bijection $\eta$} \label{sec pairing}
In this section, we define the map $\eta$ of Proposition~\ref{pairing summary}.
We begin by giving a much more precise description  of when $C_{\Omega}(v)$ has $n-1$
or $n$ roots.
Recall the notation $\br{s}$ for $S\setminus\set{s}$.

\begin{prop} \label{TwoTypes}
Let $v$ be $\Omega$-sortable. Then either:
\begin{enumerate}
\item[(1a)] $|J(v)| \leq n-2$ and 
\item[(2a)] $|C_{\Omega}(v)| = n$ and
\item[(3a)] $\Cone_{\Omega}(v)$ does not cross $\partial \Tits(A)$ and
\item[(4a)] The dimension of $\Cone_{\Omega}(v) \cap \partial \Tits(A)$ is less than $n-1$ 
\end{enumerate}
or else
\begin{enumerate}
\item[(1b)] $J(v) = \br{s_i}$ for some $i\in[n]$ and 
\item[(2b)] $|C_{\Omega}(v)| = n-1$, with $C_{\Omega}^{s_i}(v)$ undefined for the same $i$ as in~\textup{(1b)}, and  
\item[(3b)] $\Cone_{\Omega}(v)$ crosses $\partial \Tits(A)$ and 
\item[(4b)] The dimension of $\Cone_{\Omega}(v) \cap \partial \Tits(A)$ is $n-1$.
\end{enumerate}
\end{prop}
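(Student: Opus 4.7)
The proof splits on the size of $J(v)$.

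\emph{Dichotomy and label counts.} Because $\Omega$ is the oriented $n$-cycle, its only cyclic sub-digraph is the full cycle, so $J \subseteq S$ is $\Omega$-acyclic if and only if $J \subsetneq S$. As $v$ is $\Omega$-sortable, $J(v)$ is $\Omega$-acyclic and hence a proper subset of $S$, giving either $|J(v)| \le n-2$ (case (a)) or $|J(v)| = n-1$ with $J(v) = \br{s_i}$ for a unique $i$ (case (b)). This proves (1a) and (1b). Since $C_\Omega^r(v)$ is defined exactly when $J(v)\cup\set{r}\ne S$, every $r$ gives a label in case (a) and exactly $r=s_i$ fails to in case (b); this yields (2a) and (2b).

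\emph{Case (b).} Lemma~\ref{observe} applied with $J = \br{s_i} = J(v)$ gives $C_{c(\Omega,\br{s_i})}(v) \subseteq C_\Omega(v)$, and cardinality equality forces set equality. The labels are roots of the finite-type $A_{n-1}$ subsystem of $W_{\br{s_i}}$, so they lie in the hyperplane $H_i = \set{\alpha_i = 0} \subset V$ and, as labels of a finite-type Cambrian framework, span $H_i$. Dually, $\Cone_\Omega(v)$ has one-dimensional lineality $L = H_i^\perp \subset V^*$. Because $\delta$ has nonzero $\alpha_i$-coefficient, $\delta \notin H_i$, so $L \not\subseteq \delta^\perp = \partial \Tits(A)$ and $\Cone_\Omega(v)$ meets both sides of $\partial \Tits(A)$, giving (3b). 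For (4b), the same fact $\delta \notin H_i$ implies that the $n-1$ defining functionals of $\Cone_\Omega(v)$ remain independent upon restriction to the $(n-1)$-dimensional space $\delta^\perp$, so they cut out a full-dimensional simplicial cone there.

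\emph{Case (a), the main obstacle.} The strategy is to show that $C_\Omega(v)$ is a basis of $V$ and that in the resulting expansion $\delta = \sum_{r \in S} c_r\, C_\Omega^r(v)$, all $c_r \ge 0$ with at least two of them strictly positive. The first condition gives (3a), since then $\langle x, \delta \rangle = \sum c_r \langle x, C_\Omega^r(v) \rangle \ge 0$ on $\Cone_\Omega(v)$. The second gives (4a), because $\Cone_\Omega(v) \cap \delta^\perp$ equals the face on which $\langle x, C_\Omega^r(v) \rangle = 0$ for every $r$ with $c_r > 0$, of dimension $n - |\set{r : c_r > 0}| \le n-2$. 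To carry this out, decompose $J(v)$ into arcs of the cycle (each a finite type-$A$ subdiagram of $\Omega$) and use Proposition~\ref{Cc para} together with Lemma~\ref{easy lem} and Proposition~\ref{lower walls} to write the labels $C_\Omega^r(v)$ for $r \in J(v)$ explicitly as finite-type Cambrian labels; for $r \notin J(v)$, the label $C_\Omega^r(v)$ is read off from the $c(\Omega, J(v) \cup \set{r})$-sorting word, which differs from the $c(\Omega, J(v))$-sorting word only by where $r$ is inserted. The resulting explicit labels make the basis property transparent. The heart of the argument is then the positivity verification on the $c_r$; guided by the expectation that $c_r = 0$ precisely when $C_\Omega^r(v)$ is a negative root (i.e.\ $r \in \cov(v)$) and $c_r > 0$ otherwise, the second condition reduces to $|\cov(v)| \le |J(v)| \le n-2$ (since cover reflections of $v \in W_{J(v)}$ correspond to right descents, all of which lie in $J(v)$). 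The detailed combinatorial bookkeeping for both claims is presumably the content of Sections~\ref{sec pairing} and~\ref{long computation}.
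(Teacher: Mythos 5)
Your dichotomy, the label counts, and your treatment of case (b) all match the paper's proof. In particular, for (3b) the paper argues exactly as you do, observing that the $n-1$ labels lie in the root subsystem of $W_{\br{s_i}}$, so that $\Cone_{\Omega}(v)$ contains the line $\bigcap_{j\neq i}\alpha_j^\perp$, which is not contained in $\delta^\perp$; your lineality-space phrasing and your restriction-to-$\delta^\perp$ argument for (4b) are fine. For case (a) you have also identified precisely the right strategy --- exhibit $\delta$ as a nonnegative combination of the labels with at least two strictly positive coefficients --- which is exactly what the paper does.

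The gap is that you never carry out that step, and both of your guesses about how it will go are off. The ``detailed combinatorial bookkeeping'' is not deferred to Sections~\ref{sec pairing} and~\ref{long computation} (those concern the bijection $\eta$ and the identity $g'=g-1$); it is a short computation that must appear in this proof. Concretely: for each $s_i\in S\setminus J(v)$, let $\{s_{\lambda(i)},\dots,s_{i-1}\}$ be the (possibly empty) arc of $J(v)$ immediately preceding $s_i$ in the cycle; then $C_{\Omega}^{s_i}(v)=s_{\lambda(i)}\cdots s_{i-1}\alpha_i=\alpha_{\lambda(i)}+\cdots+\alpha_i$, and since these arcs together with their endpoints partition the cycle, $\sum_{s_i\in S\setminus J(v)}C_{\Omega}^{s_i}(v)=\delta$. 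Thus the expansion has $c_r=1$ for $r\notin J(v)$ and $c_r=0$ for \emph{every} $r\in J(v)$, so the number of strictly positive coefficients is $|S\setminus J(v)|\ge 2$ immediately from (1a), with no appeal to cover reflections. Your proposed criterion ``$c_r=0$ iff $C_{\Omega}^r(v)$ is a negative root'' is false: already for $n=4$ and $v=s_1s_2$ one finds $C_{\Omega}^{s_1}(v)=\alpha_2$ is positive yet has coefficient $0$ in the (unique) expansion $\delta=(\alpha_1+\alpha_2+\alpha_3)+\alpha_4$. Until the identity $\delta=\sum_{s_i\notin J(v)}C_{\Omega}^{s_i}(v)$ (or a substitute) is actually verified, the implications (1a)$\Rightarrow$(3a) and (1a)$\Rightarrow$(4a) --- the substance of the proposition --- remain unproved.
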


\begin{proof} 
The equivalence of (1$\bullet$) and (2$\bullet$) is immediate in each case, as is that of~(3$\bullet$) and~(4$\bullet$). Also, in each case, exactly one of the paired conditions holds.
Thus it is enough to establish the implications $\text{(1a)}\implies\text{(3a)}$ and $\text{(1b)}\implies\text{(3b)}$.

$\mbox{(1a)} \implies \mbox{(3a)}$: For each $s_i \in S\setminus J(v)$, if $s_{i-1}\in J(v)$, then let $\lambda(i)$ be such that $\{ s_{\lambda(i)}, s_{\lambda(i)+1}, \ldots,  s_{i-1} \} \subseteq J(v)$ and $s_{\lambda(i)-1} \not \in J(v)$. 
Otherwise, let ${\lambda(i) = i}$.
Either way, condition~(1a) says that  $\lambda(i) \neq i+1$, and 
one computes $C^{s_i}_{\Omega}(v) = {s_{\lambda(i)} s_{\lambda(i)+1} \cdots s_{i-1} \alpha_i} = {\alpha_{\lambda(i)} + \alpha_{\lambda(i)+1} + \cdots + \alpha_i}$. 
Then one has
\[ \sum_{s_i \in S \setminus J(v)} C_{\Omega}^{s_i}(v) = \sum_{s_i \in S\setminus J(v)} \left( \alpha_{\lambda(i)} + \alpha_{\lambda(i)+1} + \cdots + \alpha_i  \right)= \sum_{j=1}^n \alpha_j = \delta. \]
Thus $\br{\delta,\, \cdot\, }$ 
is nonnegative on $\Cone_{\Omega}(v)$, so $\Cone_{\Omega}(v)$ does not cross $\partial \Tits(A)$.

$\mbox{(1b)} \implies \mbox{(3b)}$: The $n-1$ roots of $C_{\Omega}(v)$ all lie in the sub root-system associated to the parabolic subgroup $W_{\br{s_i}}$. 
Thus $\Cone_{\Omega}(v)$ contains the entire line $\bigcap_{j\neq i}\alpha_j^\perp$.
In particular, $\Cone_{\Omega}(v)$ lies on both sides of $\partial \Tits(A)$.
\end{proof}

We introduce the following notations:
Let $w \mapsto w^{+}$ be the automorphism  of $W$ with $s_i^+ = s_{i+1}$. 
We also use the superscript $^+$ for the corresponding permutation of $\Phi$ and, when $C$ is a set of roots, write $C^+$ for $\set{\beta^+:\beta\in C}$. 
The inverse of ${}^+$ is written ${}^-$.
 
For integers $i$ and $j$, let $c[i,j]$ denote the product $s_i s_{i+1} \cdots s_{j-1} s_j$ if $i \leq j$ and $s_i s_{i-1} \cdots s_{j+1} s_j$ if $i \geq j$, with indices taken modulo~$n$.

Write $U_\Omega(s_i)$ for the set of $\Omega$-sortable elements $v$ described in case~(b) of Proposition~\ref{TwoTypes}, and similarly $U_{-\Omega}(s_i)$.
The union $\bigcup_{i\in[n]}U_\Omega(s_i)$ contains exactly the vertices of $\Camb_{\Omega}$ which will glue to vertices in $\Camb_{-\Omega}$. 
We will now define the bijection $\eta:U_{\Omega}(s_i)\to U_{-\Omega}(s_{i+1})$. 
Taking the union of 
 these bijections for all $i$ gives the bijection $\eta$ from Proposition~\ref{pairing summary}.

Let $v$ be in $U_{\Omega}(s_i)$. Since $v$ is $c[i+1, i+n-1]$-sortable, and $J(v) = \br{s_i}$, Lemma~\ref{easy lem} says that $v = c[i+1, i+n-1] u$ for some $c[i+1, i+n-2]$-sortable element~$u$. 
Let $u'$ be the unique $c[i+n-2, i+1]$-sortable so that $\Cone_{c[i+1, i+n-2]}(u) = - \Cone_{c[i+n-2, i+1]}(u')$.
 (This exists by Proposition~\ref{double finite}). 
 We define
\[
\eta\left(c[i+1, i+n-1] \cdot u\right) := c[i+n, i+2] \cdot  (u')^{++} .
\]
Note that $(u')^{++}$ is $c[i+n,i+3]$-sortable, so $\eta(v)$ is $c[i+n, i+2]$-sortable.

\begin{prop}\label{eta bij}
The map $\eta$ is a bijection from $U_{\Omega}(s_i)$ to $U_{-\Omega}(s_{i+1})$.
\end{prop}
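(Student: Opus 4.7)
The strategy is to check that $\eta$ maps into $U_{-\Omega}(s_{i+1})$ and then to construct a two-sided inverse by running the symmetric construction with $\Omega$ and $-\Omega$ exchanged. For the first part, observe that $c[i+n, i+2]$ lists every generator in $\br{s_{i+1}}$ in the order dictated by $-\Omega$, so $c[i+n, i+2] = c(-\Omega, \br{s_{i+1}})$; its restriction to $W_{\br{s_{i+1}, s_{i+2}}}$ is $c[i+n, i+3] = (c[i+n-2, i+1])^{++}$, and $(u')^{++}$ is $c[i+n, i+3]$-sortable in that parabolic. A converse to Lemma~\ref{easy lem} (a short sorting-word argument, showing that concatenating a complete first block with a $c'$-sorting word yields a $c$-sorting word) then implies that $c[i+n, i+2] \cdot (u')^{++}$ is length-additive, $c[i+n, i+2]$-sortable, and has support $\br{s_{i+1}}$. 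Since $\br{s_{i+1}}$ is a path in the diagram, it is $(-\Omega)$-acyclic, so $\eta(v)$ is $(-\Omega)$-sortable with $J(\eta(v)) = \br{s_{i+1}}$; case (b) of Proposition~\ref{TwoTypes} then gives $\eta(v) \in U_{-\Omega}(s_{i+1})$.

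Running the analogous construction in the reverse direction defines $\eta^{-1} \colon U_{-\Omega}(s_{i+1}) \to U_\Omega(s_i)$: given $v' = c[i+n, i+2] \cdot u''$ (by Lemma~\ref{easy lem}), let $u'''$ be the unique $c[i+3, i+n]$-sortable element with $\Cone_{c[i+n, i+3]}(u'') = -\Cone_{c[i+3, i+n]}(u''')$ (Proposition~\ref{double finite}), and set $\eta^{-1}(v') := c[i+1, i+n-1] \cdot (u''')^{--}$. To verify $\eta^{-1} \circ \eta = \mathrm{id}$, start from $v = c[i+1, i+n-1] \cdot u$ so that $\eta(v) = c[i+n, i+2] \cdot (u')^{++}$ and the $\eta^{-1}$-decomposition identifies $u'' = (u')^{++}$. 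Because ${}^{++}$ is a diagram automorphism of $\tilde{A}_{n-1}$, it intertwines sortable elements with their cones, so applying it to the identity $\Cone_{c[i+1, i+n-2]}(u) = -\Cone_{c[i+n-2, i+1]}(u')$ produces $\Cone_{c[i+3, i+n]}(u^{++}) = -\Cone_{c[i+n, i+3]}((u')^{++})$. The uniqueness clause of Proposition~\ref{double finite} then forces $u''' = u^{++}$, so $(u''')^{--} = u$ and $\eta^{-1}(\eta(v)) = v$; the other composition is symmetric.

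The main obstacle is the converse to Lemma~\ref{easy lem} invoked in the first step: showing that for $u \in W_{\br{s_k}}$ that is $c'$-sortable with $c' = s_1 \cdots s_{k-1}$, the product $c \cdot u$ is $c$-sortable of length $\ell(c) + \ell(u)$ with full support. This should be a direct but not-quite-quoted calculation: one verifies that concatenating the complete first block $s_1 \cdots s_k$ with the $c'$-sorting word of $u$ yields a reduced $c^{\infty}$-subword with nested block supports, and hence is the $c$-sorting word of $c \cdot u$.
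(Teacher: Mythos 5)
Your proof is correct and is essentially the paper's argument: the paper simply writes $\eta$ as the composition of the same four bijections ($v\mapsto c[i+1,i+n-1]^{-1}v$, the antipodal correspondence $u\mapsto u'$ of Proposition~\ref{double finite}, the automorphism $(\cdot)^{++}$, and left multiplication by $c[i+n,i+2]$), leaving the converse of Lemma~\ref{easy lem} implicit just as you do. Your explicit two-sided inverse, with the check that $(\cdot)^{++}$ intertwines the antipodal correspondence, is only a repackaging of the same ingredients.
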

\begin{proof}
The map $v \mapsto u=c[i+1, i+n-1]^{-1} v$ is a bijection between $U_{\Omega}(s_i)$ and the set of $c[i+1, i+n-2]$-sortable elements; the map $u \mapsto u'$ is a bijection between $c[i+1, i+n-2]$-sortable elements and $c[i+n-2, i+1]$-sortable elements; the map $u' \mapsto (u')^{++}$ is a bijection between $c[i+n-2, i+1]$-sortable elements and $c[i+n, i+3]$-sortable elements; the map $w \mapsto c[i+n, i+2] w$ is a bijection between $c[i+n, i+3]$-sortable elements and $U_{-\Omega}(s_{i+1})$.

The map $\eta$ composes these four bijections, and is thus a bijection.
\end{proof}

The bijection $\eta$ is constructed in order to make the following result hold:
\begin{prop}\label{challenging}
For $i\in[n]$ and $v\in U_{\Omega}(s_i)$, we have ${\left| C_{\Omega}(v) \cup ( - C_{-\Omega}(\eta(v)) ) \right| = n}$ and $\Cone_{\Omega}(v) \cap \partial \Tits(A) = - \Cone_{\Omega}(\eta(v)) \cap \partial \Tits(A)$.
\end{prop}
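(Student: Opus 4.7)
The plan is to identify $C_\Omega(v)$ and $-C_{-\Omega}(\eta(v))$ as subsets of $\Phi$ using the factorizations $v=c[i+1,i+n-1]\cdot u$ and $\eta(v)=c[i+n,i+2]\cdot(u')^{++}$, and then to match them via Proposition~\ref{double finite} applied inside the finite type-$A_{n-2}$ parabolic $W_{\br{s_i,s_{i+n-1}}}$ where $u$ and $u'$ both live.

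First, by Proposition~\ref{TwoTypes}(1b) we have $J(v)=\br{s_i}$, and since $c(\Omega,\br{s_i})=c[i+1,i+n-1]$, Lemma~\ref{observe} identifies $C_\Omega(v)=C_{c[i+1,i+n-1]}(v)$ computed inside the finite type-$A_{n-1}$ parabolic $W_{\br{s_i}}$, and symmetrically $C_{-\Omega}(\eta(v))=C_{c[i+n,i+2]}(\eta(v))$ inside $W_{\br{s_{i+1}}}$. Because $u\in W_{\br{s_i,s_{i+n-1}}}$ and its $c[i+1,i+n-2]$-sorting word omits $s_{i+n-1}$, the $c[i+1,i+n-1]$-sorting word for $v$ is the concatenation of a full first block $c[i+1,i+n-1]$ with the $c[i+1,i+n-2]$-sorting word of $u$. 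Tracking skipped positions in $c[i+1,i+n-1]^\infty$ expresses each label $C^{s_{i+k}}_{c[i+1,i+n-1]}(v)$ for $k\le n-2$ as an appropriate conjugate, by a prefix of $v$'s sorting word, of the corresponding label in $C_{c[i+1,i+n-2]}(u)$, and gives the \emph{boundary} formula $C^{s_{i+n-1}}_{c[i+1,i+n-1]}(v)=v\cdot\alpha_{i+n-1}$. A parallel analysis on the $-\Omega$ side expresses every label of $\eta(v)$ via $(u')^{++}$, with boundary value $C^{s_{i+2}}_{c[i+n,i+2]}(\eta(v))=\eta(v)\cdot\alpha_{i+2}$.

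The key input is now Proposition~\ref{double finite} applied in the type-$A_{n-2}$ parabolic, giving $C_{c[i+1,i+n-2]}(u)=-C_{c[i+n-2,i+1]}(u')$; the automorphism ${}^{++}$ transports this identity into the parabolic $W_{\br{s_{i+1},s_{i+2}}}$ containing $(u')^{++}$. Substituting into the formulas of the previous paragraph, I expect to establish the position-by-position pairing $(s_{i+k},s_{i+n-k+1})$ (indices cyclic modulo $n$) between the labels of $C_\Omega(v)$ and those of $C_{-\Omega}(\eta(v))$ such that
\[
C^{s_{i+k}}_\Omega(v)=-C^{s_{i+n-k+1}}_{-\Omega}(\eta(v)) \quad \text{literally in } \Phi \quad \text{for } k=1,\ldots,n-2,
\]
while for the remaining pair of boundary labels one has the single identity
\[
v\cdot\alpha_{i+n-1}+\eta(v)\cdot\alpha_{i+2}=-\delta,
\]
so these two boundary labels are distinct as real roots in $\Phi$ but agree in $V/\RR\delta$.

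Both conclusions then follow at once. The union $C_\Omega(v)\cup(-C_{-\Omega}(\eta(v)))$ has exactly $(n-2)+2=n$ elements, and projecting to $\bar V=V/\RR\delta$ (whose dual is naturally $\partial\Tits(A)=\delta^\perp$) collapses the boundary pair, so the two simplicial $(n-1)$-dimensional cones $\Cone_\Omega(v)\cap\partial\Tits(A)$ and $-\Cone_{-\Omega}(\eta(v))\cap\partial\Tits(A)$ are cut out by identical sets of root hyperplane inequalities in $\bar V^*$ on identical sides, and are therefore equal. The main obstacle I anticipate is the detailed bookkeeping that underlies the label formulas, and especially the boundary identity displayed above; these are presumably the content of the combinatorial lemmas of Sections~\ref{sec pairing} and~\ref{long computation}.
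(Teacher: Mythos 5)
Your proposal follows essentially the same route as the paper: reduce to the finite type-$A$ parabolics via the factorizations $v=c[i+1,i+n-1]u$ and $\eta(v)=c[i+n,i+2](u')^{++}$, invoke Proposition~\ref{double finite} in the type-$A_{n-2}$ subgroup to match $n-2$ of the labels (the paper's Lemma~\ref{ge n-2}), and handle the two remaining ``boundary'' labels by showing they differ by $\delta$ (the paper's Lemma~\ref{Funny Roots}, whose proof indeed rests on the $g'=g-1$ computation of Section~\ref{long computation}), after which both conclusions follow by passing to $V/\RR\delta$. The only cosmetic difference is that you phrase the boundary labels as $v\cdot\alpha_{i+n-1}$ and $\eta(v)\cdot\alpha_{i+2}$ rather than computing them explicitly as sums of consecutive simple roots, and you assert a specific index-by-index pairing where the paper only needs equality of sets; neither affects correctness.
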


The proof uses several lemmas.
\begin{lemma}\label{ge n-2}
For $i\in[n]$ and $v\in U_{\Omega}(s_i)$, the sets $\set{C_\Omega^{s_j}(v):i+1\le j\le i+n-2}$ and
$\set{-C_{-\Omega}^{s_j}(\eta(v)):i+3\le j\le i+n}$ coincide.
\end{lemma}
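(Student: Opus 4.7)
The plan is to verify the claimed set equality by rewriting each side in terms of the auxiliary sortable elements $u$ and $u'$, invoking Proposition~\ref{double finite}, and reducing the claim to a short identity between linear operators on $\mathrm{span}(\alpha_{i+1},\ldots,\alpha_{i+n-2})$.

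I would first analyze the $c[i+1,i+n-1]$-sorting word of $v$. From the construction preceding Proposition~\ref{eta bij} we have the factorization $v = c[i+1,i+n-1]\cdot u$ with $u$ being $c[i+1,i+n-2]$-sortable in $W_{\{s_{i+1},\ldots,s_{i+n-2}\}}$. Consequently the $c[i+1,i+n-1]$-sorting word of $v$ is obtained by concatenating a full first block $c[i+1,i+n-1]$ with the $c[i+1,i+n-2]$-sorting word of $u$; every subsequent occurrence of $s_{i+n-1}$ in $(c[i+1,i+n-1])^\infty$ is skipped because $s_{i+n-1}\notin J(u)$. For each $j\in\{i+1,\ldots,i+n-2\}$ the first skip of $s_j$ in $v$'s word therefore aligns precisely with the first skip of $s_j$ in $u$'s word, yielding
\[
C_\Omega^{s_j}(v) = c[i+1,i+n-1]\cdot C_{c[i+1,i+n-2]}^{s_j}(u).
\]
The identical analysis applied to $\eta(v) = c[i+n,i+2]\cdot(u')^{++}$, together with the observation that the rotation $w\mapsto w^{++}$ carries the sortability data $(c[i+n-2,i+1],u')$ to $(c[i+n,i+3],(u')^{++})$ and sends the label indexed by $s_{j-2}$ to the one indexed by $s_j$, gives the analogous formula
\[
C_{-\Omega}^{s_j}(\eta(v)) = c[i+n,i+2]\cdot\bigl(C_{c[i+n-2,i+1]}^{s_{j-2}}(u')\bigr)^{++}
\]
for $j\in\{i+3,\ldots,i+n\}$.

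Next, I would invoke Proposition~\ref{double finite} inside the finite type-$A_{n-2}$ group $W_{\{s_{i+1},\ldots,s_{i+n-2}\}}$. By the very definition of $u'$ given just before this lemma, the proposition yields $\Cone_{c[i+1,i+n-2]}(u) = -\Cone_{c[i+n-2,i+1]}(u')$, and hence (since a simplicial cone determines its set of inward-pointing root normals) $C_{c[i+1,i+n-2]}(u) = -C_{c[i+n-2,i+1]}(u')$ as sets. Substituting into the two formulas above, the lemma reduces to verifying the set identity
\[
c[i+1,i+n-1]\cdot X \;=\; c[i+n,i+2]\cdot X^{++}, \qquad X := C_{c[i+n-2,i+1]}(u')\subseteq\mathrm{span}(\alpha_{i+1},\ldots,\alpha_{i+n-2}).
\]

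Finally, I would prove this by showing that the linear operators $\beta\mapsto c[i+1,i+n-1]\cdot\beta$ and $\beta\mapsto c[i+n,i+2]\cdot\beta^{++}$ coincide on $\mathrm{span}(\alpha_{i+1},\ldots,\alpha_{i+n-2})$. The standard action of a linearly-oriented type-$A$ Coxeter element gives $c[i+1,i+n-1]\cdot\alpha_{i+k} = \alpha_{i+k+1}$ for each $k\in\{1,\ldots,n-2\}$; and since $c[i+n,i+2] = c[i+2,i+n]^{-1}$, the corresponding inverse shift gives $c[i+n,i+2]\cdot\alpha_{i+m} = \alpha_{i+m-1}$ for each $m\in\{3,\ldots,n\}$. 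Applied to the basis vector $\alpha_{i+k}$ with $k\in\{1,\ldots,n-2\}$, the second operator then computes as $c[i+n,i+2]\cdot\alpha_{i+k+2} = \alpha_{i+k+1}$, matching the first. I expect the main obstacle to be the careful bookkeeping in the first step, where one must track how the periodic skipped copies of $s_{i+n-1}$ (and, symmetrically, of $s_{i+2}$) shift absolute positions in the infinite sorting words while preserving the relative position of the first skip of each $s_j$; once those label formulas are in hand, the reduction via Proposition~\ref{double finite} and the concluding basis-vector check are essentially mechanical.
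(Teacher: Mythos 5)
Your proof is correct and follows essentially the same route as the paper: factor $v=c[i+1,i+n-1]u$ and $\eta(v)=c[i+n,i+2](u')^{++}$ to express both label sets in terms of $C_{c[i+1,i+n-2]}(u)$ and $C_{c[i+n-2,i+1]}(u')$, apply Proposition~\ref{double finite} to identify these up to sign, and finish with the observation that the linear Coxeter elements act as index shifts on the relevant simple roots. The paper packages that last step via the rotation automorphism ${}^+$ and its equivariance with the labeling, but this is the same computation as your basis-vector check.
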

\begin{proof}
If $i+1\le j\le i+n-2$, then $c[i+1,i+n-1]\alpha_j=\alpha_{j+1}=\alpha_j^+$.
Thus,  
\begin{multline*}
\set{C_{c[i+1,i+n-1]}^{s_j}(v):i+1\le j\le i+n-2} = c[i+1, i+n-1] C_{c[i+1, i+n-2]}(u)  \\
= C_{c[i+1,i+n-2]}(u)^+  = C_{c[i+2,i+n-1]}(u^+).
\end{multline*}  
Similarly,  
\begin{multline*} 
\set{-C_{c[i+n,i+2]}^{s_j}(\eta(v)):i+3\le j\le i+n} = - c[i+n, i+2] C_{c[i+n,i+3]}((u')^{++})  \\
= - [C_{c[i+n, i+3]}((u')^{++})]^{-} = - C_{c[i+n-1, i+2]}((u')^{+}).
\end{multline*}

Because $\Cone_{c[i+1, i+n-2]}(u) = - \Cone_{c[i+n-2, i+1]}(u')$, also $C_{c[i+1, i+n-2]}(u)$ is equal to $-C_{c[i+n-2, i+1]}(u')$, so 
$ C_{c[i+2,i+n-1]}(u^+) = - C_{c[i+n-1, i+2]}((u')^{+})  $
as desired.
\end{proof}

\begin{lemma}\label{Funny Roots} 
If $i\in[n]$ and $v\in U_{\Omega}(s_i)$, then the root $-C^{s_{i+2}}_{c[i+n,i+2]}(\eta(v))$ is positive, the root $C^{s_{i-1}}_{c[i+1,i+n-1]}(v)$ is negative, and $-C^{s_{i+2}}_{c[i+n,i+2]}(\eta(v))-C^{s_{i-1}}_{c[i+1,i+n-1]}(v) = \delta$. 
\end{lemma}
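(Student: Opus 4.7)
The plan is to derive explicit closed forms for both $C^{s_{i-1}}_{c_1}(v)$ and $C^{s_{i+2}}_{c_2}(\eta(v))$ (with $c_1 := c[i+1,i+n-1]$ and $c_2 := c[i+n, i+2]$), and then check that their sum equals $-\delta$. Throughout I use the explicit structure from Section~\ref{main construction sec}: $v = c_1 u$ with $u$ being $c'$-sortable in the finite type-$A_{n-2}$ parabolic $W_{\{s_{i+1},\ldots,s_{i-2}\}}$ for $c' := c[i+1, i+n-2]$, and dually $\eta(v) = c_2 (u')^{++}$ with $u'$ paired to $u$ by the Cambrian antipode of Proposition~\ref{double finite}.

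For the $v$ side, since $u$ never uses $s_{i-1}$, the $c_1$-sorting word of $v$ begins with the full first block $c_1 = s_{i+1}s_{i+2}\cdots s_{i-1}$ and then continues with the sorting word of $u$ (which avoids $s_{i-1}$). The first skipped occurrence of $s_{i-1}$ in $c_1^\infty$ therefore falls at position $2(n-1)$, at the end of the second block, and the prefix preceding this skip is exactly $c_1 \cdot u^{(1)}$, where $u^{(1)}$ is the first block of $u$. By the definition of $C$-labels, this gives $C^{s_{i-1}}_{c_1}(v) = c_1 u^{(1)} \alpha_{i-1}$. Because $s_{i-2}$ is the only element of $\{s_{i+1},\ldots,s_{i-2}\}$ adjacent to $s_{i-1}$ in the diagram of $\tilde A_{n-1}$, processing $u^{(1)}$ letter by letter from the right gives $u^{(1)} \alpha_{i-1} = \alpha_{i-1} + \alpha_{i-2} + \cdots + \alpha_\mu$, where $\mu := i-1-m$ and $m$ is the length of the maximal descending tail $\{s_{i-2}, s_{i-3}, \ldots, s_{i-1-m}\}$ contained in the support of $u^{(1)}$. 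A straightforward telescoping (the first letters of $c_1$ read from the right successively strip off the leftmost summand; at $s_\mu$ the single remaining term flips sign; the remaining letters extend the sum in the other direction) then produces
\[
C^{s_{i-1}}_{c_1}(v) \;=\; -\bigl(\alpha_{i+1} + \alpha_{i+2} + \cdots + \alpha_\mu\bigr),
\]
a negative root, establishing the second claim of the lemma.

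A parallel argument on the $\eta(v)$ side, interchanging the roles of $s_{i-1}$ and $s_{i+2}$ and of $c_1$ and $c_2$, yields
\[
-C^{s_{i+2}}_{c_2}(\eta(v)) \;=\; \alpha_{i+m'+2} + \alpha_{i+m'+3} + \cdots + \alpha_i,
\]
a positive root, where $m'$ is the length of the maximal ascending prefix $\{s_{i+1}, s_{i+2}, \ldots, s_{i+m'}\}$ in the support of the first block of $u'$'s $c[i+n-2,i+1]$-sorting word. This handles the first claim. Adding the two displayed expressions, the quantity $-C^{s_{i+2}}_{c_2}(\eta(v)) - C^{s_{i-1}}_{c_1}(v)$ equals $\delta = \alpha_1 + \cdots + \alpha_n$ precisely when the cyclic ranges $[i+1, \mu]$ and $[i+m'+2, i]$ partition the full set of simple indices $\{i+1, i+2, \ldots, i\}$, which in turn is equivalent to the single combinatorial identity $m + m' = n-2$.

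The identity $m + m' = n-2$ is the main obstacle: it links the right-end combinatorics of the first block of $u$ to the left-end combinatorics of the first block of $u'$ under the Cambrian antipode of Proposition~\ref{double finite}. I would tackle it by exploiting the set-level equality $C_{c'}(u) = -C_{(c')^{-1}}(u')$ in the finite parabolic $A_{n-2}$ — the same letter-by-letter computation, applied inside $A_{n-2}$, extracts $m$ from a specific label of $u$ and $m'$ from a specific label of $u'$, so that matching the label sets under the antipode forces the sought relation. As a cross-check, induction on $\ell(u)$, split according to whether $s_{i+1}\le u$, is also workable; the endpoints $u=e$ (giving $u'=w_0$, $m=0$, $m'=n-2$) and $u=w_0$ (giving $u'=e$, $m=n-2$, $m'=0$) provide immediate base cases. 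The careful verification is the principal content of Section~\ref{long computation}.
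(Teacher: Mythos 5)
Your explicit computation of the two labels coincides with the paper's proof. The paper also locates the skip of $s_{i-1}$ at the end of the second block, writes the first block of $u$ as $s_{i_1}\cdots s_{i_p}s_g s_{g+1}\cdots s_{i+n-2}$ with $i_p\le g-2$ (your $g$ is $i-1-m$), computes $C^{s_{i-1}}_{c[i+1,i+n-1]}(v)=-(\alpha_{i+1}+\cdots+\alpha_g)$ and $-C^{s_{i+2}}_{c[i+n,i+2]}(\eta(v))=\alpha_{g'+2}+\cdots+\alpha_{i+n}$ with $g'$ read off from the first block of $u'$ (your $g'=i+m'$), and reduces the lemma to the single identity $g'=g-1$, i.e.\ your $m+m'=n-2$, which it likewise defers to Section~\ref{long computation}. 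So everything you actually carry out is correct and is the paper's argument.

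The gap is the deferred identity, and your primary route to it is not convincing. You propose to extract $m$ from ``a specific label'' of $u$ and $m'$ from one of $u'$ and match them via $C_{c'}(u)=-C_{(c')^{-1}}(u')$. But $g$ is not visibly encoded in any single label of $u$ inside the finite parabolic: already in $A_2$ with $c'=t_1t_2$, the sortable elements $t_1t_2$ and $t_1t_2t_1$ both have $g=1$ yet have label sets $\set{\alpha_2,-\alpha_1-\alpha_2}$ and $\set{-\alpha_1,-\alpha_2}$, so the root $-(\alpha_g+\cdots+\alpha_{k-1})$ need not occur as a label and no fixed index $r$ has $C^{t_r}_{c'}(u)$ determining $g$. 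What the paper actually proves (Proposition~\ref{prop reversal}) is a coset statement: $g$ records which coset $t_g t_{g+1}\cdots t_{k-1}S_{[1,k-1]}$ contains $u$ (Lemma~\ref{coset block1}); these cosets are weak-order intervals whose minima are $c'$-sortable, so $\pidown^{c'}$ preserves them (Lemma~\ref{pidown coset}); and since $u'=\pidown^{(c')^{-1}}(uw_0)$ while $t_g\cdots t_{k-1}S_{[1,k-1]}w_0=t_{g-1}\cdots t_2 t_1 S_{[2,k]}$ (Lemma~\ref{w0 negation}), the element $u'$ lands in the coset indexed by $g-1$. That projection-preserves-cosets argument is the missing ingredient; the set-level equality of labels alone does not obviously yield it, and your induction alternative is stated only at the level of base cases, so it cannot be evaluated.
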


\begin{proof} 
Write the first block of the $c[i+1, i+n-2]$-sorting word for $u$ in the form $s_{i_1} s_{i_2} \cdots s_{i_p} s_{g} s_{g+1} \cdots s_{i+n-2}$ where $i_p \leq g-2$. (If $s_{i+n-2}$ does not occur in $u$, then $g=i+n-1$.)
Here $i+1 \leq g \leq i+n-2$. 
We calculate
\begin{multline*}
 C^{s_{i-1}}_{c[i+1,i+n-1]}(v)= 
 c[i+1, i+n-1] s_{i_1} s_{i_2} \cdots s_{i_p} s_{g} s_{g+1} \cdots s_{i+n-2} \alpha_{i+n-1} =\\
 c[i+1, i+n-1] \left( \alpha_g + \alpha_{g+1} + \cdots + \alpha_{i+n-1} \right) = 
 - \alpha_{i+1} - \alpha_{i+2} - \cdots - \alpha_g. 
 \end{multline*} 
Similarly, write the first block of the $c[i+n-2, i+1]$-sorting word for $u'$ in the form $s_{j_1} s_{j_2} \cdots s_{j_q} s_{g'} s_{g'-1} \cdots s_{i+1}$ with $j_q \geq g'+2$. We have
\begin{multline*} 
C^{s_{i+2}}_{c[i+n,i+2]}(\eta(v)) =
c[i+n, i+2]  s_{j_1+2} s_{j_2+2} \cdots s_{j_q+2} s_{g'+2} s_{g'+1} \cdots s_{i+3} \alpha_{i+2} = \\
c[i+n, i+2] \left( \alpha_{g'+2} + \alpha_{g'+1} + \cdots + \alpha_{i+2} \right) =
- \alpha_{i+n} - \alpha_{i+n-1} - \cdots - \alpha_{g'+2} .
\end{multline*}

In Section~\ref{long computation}, we will prove that $g'=g-1$ (Proposition~\ref{prop reversal}). This computation is elementary but lengthy, which is why we have given it its own section. Accepting this result for now, 
$-C^{s_{i+2}}_{c[i+n,i+2]}(\eta(v))-C^{s_{i-1}}_{c[i+1,i+n-1]}(v)$ equals
\[ (\alpha_{i+n} + \alpha_{i+n-1} + \cdots + \alpha_{g+1} ) + (  \alpha_{i+1} + \alpha_{i+2} + \cdots + \alpha_g) = \delta. \qedhere \]
\end{proof}

\begin{proof}[Proof of Proposition~\ref{challenging}]  
The label $C_\Omega^{s_i}(v)$ is undefined and the other $n-1$ labels on $v$ are defined.
Similarly, $C_{-\Omega}^{s_{i+1}}(\eta(v))$ is undefined and $n-1$ labels on $\eta(v)$ are defined.
Thus $\left| C_{\Omega}(v) \cup ( - C_{-\Omega}(\eta(v)) ) \right| = n$ is equivalent to $\left| C_{\Omega}(v) \cap ( - C_{-\Omega}(\eta(v)) ) \right| = n-2$.
Lemma~\ref{ge n-2} implies that $\left| C_{\Omega}(v) \cap ( - C_{-\Omega}(\eta(v)) ) \right| \ge n-2$.
Lemma~\ref{Funny Roots} says in particular that $C^{s_{i-1}}_{c[i+1,i+n-1]}(v)$ and $-C^{s_{i+2}}_{c[i+n,i+2]}(\eta(v))$ are distinct, and we conclude that $\left| C_{\Omega}(v) \cup ( - C_{-\Omega}(\eta(v)) ) \right| = n$.

These two lemmas also show that the roots normal to the walls of $\Cone_{\Omega}(v)$, and the roots normal to the walls of $-\Cone_{\Omega}(\eta(v))$, differ only by adding $\delta$ to one root.
Therefore, both sets of roots define the same cone in $\delta^{\perp}$. 
Since ${\partial(\Tits(A)) = \delta^{\perp}}$, we deduce that $\Cone_{\Omega}(v) \cap \partial \Tits(A) = - \Cone_{\Omega}(\eta(v)) \cap \partial \Tits(A)$.
\end{proof}

\def\lc{\vec{c}}

\section{Proof that $g'=g-1$} \label{long computation}
We now verify the equation $g'=g-1$ needed in the proof of Lemma~\ref{Funny Roots}.
We begin by stripping down the notation.
Let $t_1$, $t_2$, \dots, $t_{k-1}$ be the simple generators of the symmetric group $S_k$, numbered in order along the Dynkin diagram.
 Let $S_{[1,k-1]}$ be the subgroup of $S_k$ generated by $t_1$, $t_2$, \ldots, $t_{k-2}$ and let $S_{[2,k]}$ be the subgroup generated by $t_2$, $t_3$, \dots, $t_{k-1}$. Let $\lc$ denote the Coxeter element $t_1 t_2 \cdots t_{k-1}$.
When we apply the results of this section to prove Lemma~\ref{Funny Roots}, we will have $k=n-2$ and $t_j = s_{i+j}$.

\begin{lemma} \label{pidown coset}  
For any $w \in S_k$, the $\lc$-sortable element $\pidown^{\lc}(w)$ is in the coset~$w S_{[1,k-1]}$.
\end{lemma}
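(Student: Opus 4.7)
The plan is to sandwich $\pidown^{\lc}(w)$ between $w$ and a carefully chosen $\lc$-sortable element of the same coset, then deduce $\pidown^{\lc}(w)(k) = w(k)$ by an inversion count. Writing $b := w(k)$, I observe that since $S_{[1,k-1]}$ is the stabilizer of $k$, the coset $w S_{[1,k-1]}$ is exactly $\{v \in S_k : v(k) = b\}$, so the goal becomes $\pidown^{\lc}(w)(k) = b$.

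Set $u := t_b t_{b+1}\cdots t_{k-1}$ (with $u = e$ when $b = k$), the unique minimum-length element of the coset; its one-line notation is $1, 2, \ldots, b-1, b+1, \ldots, k, b$. The word $t_b t_{b+1}\cdots t_{k-1}$ sits inside a single copy of $t_1 t_2\cdots t_{k-1}$, so it is the $\lc$-sorting word for $u$ and consists of a single block; hence $u$ is $\lc$-sortable. Since $u$ is the minimum-length coset representative, $u \le w$ in the weak order. Setting $v := \pidown^{\lc}(w)$ and applying Lemma~\ref{helpful} to the $\lc$-sortable element $u$ and to $w$ yields $u \le v \le w$.

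The main step is then to conclude $v(k) = b$ from the containments $\inv(u) \subseteq \inv(v) \subseteq \inv(w)$. Identifying reflections with pairs $(a,c)$, $a < c$, a direct check gives $\inv(u) = \{(b, c) : b < c \le k\}$, so the first containment forces every value $c > b$ to appear before $b$ in the one-line notation of $v$. For the second containment, $w$ places $b$ at position $k$, so no reflection $(c, b)$ with $c < b$ is in $\inv(w)$, and hence none is in $\inv(v)$; this means every value $c < b$ also appears before $b$ in $v$. Combining, every value other than $b$ precedes $b$ in $v$, forcing $v(k) = b$ and $v \in w S_{[1,k-1]}$. I do not anticipate a real obstacle: the key insight is simply that the minimum coset representative happens to be $\lc$-sortable, which works precisely because $S_{[1,k-1]}$ is generated by all simple reflections except $t_{k-1}$, the last letter of the reduced word for $\lc$.
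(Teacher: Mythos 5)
Your proof is correct and follows essentially the same route as the paper: both identify the minimal coset representative $t_b t_{b+1}\cdots t_{k-1}$ as $\lc$-sortable and sandwich $\pidown^{\lc}(w)$ between it and $w$ using the defining property of $\pidown^{\lc}$. The only difference is cosmetic: where the paper invokes the fact that left cosets of $S_{[1,k-1]}$ are intervals in the weak order, you verify the needed order-convexity directly by an inversion-set computation in one-line notation.
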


\begin{proof}
The left cosets of $S_{[1,k-1]}$ are intervals in the weak order on $S_k$.
The minimal elements of these intervals are products of the form $t_g t_{g+1} \cdots t_{k-1}$, for $1 \leq g \leq k$.  
These minimal elements are $\lc$-sortable. 

Choose $g$ so that $w$ is in the coset $t_g t_{g+1} \cdots t_{k-1} S_{[1,k-1]}$. Then $w \geq t_g t_{g+1} \cdots t_{k-1}$ so $w \geq \pidown^{\lc}(w) \geq \pidown^{\lc}(t_g t_{g+1} \cdots t_{k-1}) = t_g t_{g+1} \cdots t_{k-1}$. Since $w$ and $t_g t_{g+1} \cdots t_{k-1}$ are in the same $S_{[1,k-1]}$ coset, we deduce that $\pidown^{\lc}(w)$ also lies in that coset.
\end{proof} 

\begin{lemma} \label{coset block1} 
Suppose $u$ is ${\lc}$-sortable and lies in $t_g t_{g+1} \cdots t_{k-1} S_{[1,k-1]}$. 
The first block of the ${\lc}$-sorting word for $u$ is of the form $t_{i_1} t_{i_2} \cdots t_{i_r} t_g t_{g+1} \cdots t_{k-1}$ where $i_r \leq g-2$.
\end{lemma}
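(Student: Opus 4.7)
The plan is to translate to permutation language. Viewing $S_k$ with $t_i=(i,i+1)$, the subgroup $S_{[1,k-1]}$ is the stabilizer of $k$, so $t_g t_{g+1}\cdots t_{k-1}\,S_{[1,k-1]}=\{u\in S_k:u(k)=g\}$. The first block of the $\lc$-sorting word for $u$ is built greedily from $b_0=e$: for $j=1,\ldots,k-1$, set $b_j=b_{j-1}t_j$ when $\ell(b_{j-1}t_j)=\ell(b_{j-1})+1$ and $b_{j-1}t_j\leq u$ in the weak order, and $b_j=b_{j-1}$ otherwise; the first block is $b_{k-1}$. The key tools are the standard identity $\inv(b_{j-1}t_j)=\inv(b_{j-1})\cup\{b_{j-1}t_jb_{j-1}^{-1}\}$ whenever lengths increase, together with the criterion that for $a<b$ the transposition $(a,b)$ lies in $\inv(u)$ iff $u^{-1}(a)>u^{-1}(b)$. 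Since $b_{j-1}\leq u$ holds by induction, whether greedy accepts $t_j$ reduces to whether the conjugate $b_{j-1}t_jb_{j-1}^{-1}$ is an inversion of $u$.

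First I would show that $t_{g-1}$ is always rejected. Because $b_{g-2}$ is built only from letters in $\{t_1,\ldots,t_{g-2}\}$, it lies in $S_{[1,g-1]}$ and fixes each of $g,g+1,\ldots,k$, so $b_{g-2}t_{g-1}b_{g-2}^{-1}=(b_{g-2}(g-1),b_{g-2}(g))=(m,g)$ for some $m\in\{1,\ldots,g-1\}$. Membership $(m,g)\in\inv(u)$ would require $u^{-1}(m)>u^{-1}(g)=k$, which is absurd; greedy therefore skips $t_{g-1}$. Next, by induction on $j=g,g+1,\ldots,k-1$, I show that $t_j$ is accepted and that $b_{j-1}=b_{g-1}\cdot t_gt_{g+1}\cdots t_{j-1}$, where $b_{g-1}=b_{g-2}$ since $t_{g-1}$ was skipped. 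The cycle $t_gt_{g+1}\cdots t_{j-1}=(g,g+1,\ldots,j)$ sends $j\mapsto g$ and fixes $j+1$, while $b_{g-1}$ fixes both $g$ and $j+1$, so $b_{j-1}(j)=g$, $b_{j-1}(j+1)=j+1$, and $b_{j-1}t_jb_{j-1}^{-1}=(g,j+1)$. This transposition lies in $\inv(u)$ because $u^{-1}(g)=k>u^{-1}(j+1)$, the inequality being strict because $j+1>g=u(k)$. The required length additivity $\ell(b_{j-1}t_j)=\ell(b_{j-1})+1$ is immediate, since $b_{g-1}$ commutes past every $t_g,\ldots,t_j$ (all indices differ by at least two) and $t_g\cdots t_j$ is visibly reduced.

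Putting the two parts together, $b_{k-1}=b_{g-1}\cdot t_gt_{g+1}\cdots t_{k-1}$, where $b_{g-1}$ is an increasing product $t_{i_1}\cdots t_{i_r}$ in letters drawn from $\{t_1,\ldots,t_{g-2}\}$, which is exactly the form claimed with $i_r\leq g-2$ (and $r=0$ when $b_{g-1}$ is trivial, which includes the edge cases $g=1$ and $g=k$). The argument is essentially a short greedy induction, and I foresee no serious obstacle beyond the small bookkeeping around length-additivity; the essential nontrivial input is the image characterization of inversions combined with the fact that $u^{-1}(g)=k$ is forced to be the maximal position, which simultaneously blocks inclusion of $t_{g-1}$ and automatically permits each $t_j$ for $j\geq g$.
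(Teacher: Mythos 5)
Your proof is correct, but it takes a genuinely different route from the paper's. The paper argues at the level of the weak order in two steps: first, since $u$ has a reduced word beginning $t_g t_{g+1}\cdots t_{k-1}$, those generators lie in the support of $u$, and because $u$ is $\lc$-sortable the first block of its sorting word contains the full support; second, $t_{g-1}$ is excluded by contradiction --- if the leading block contained $t_f\cdots t_{k-1}$ but not $t_{f-1}$ for some $f<g$, then $u\ge t_f\cdots t_{k-1}$, which is incompatible with $u$ lying in the coset $t_g\cdots t_{k-1}S_{[1,k-1]}$ because the upper order ideal over $t_f\cdots t_{k-1}$ is disjoint from that coset. You instead run the greedy construction of the leftmost reduced subword explicitly in one-line notation, using the single fact $u^{-1}(g)=k$ both to reject $t_{g-1}$ (the relevant conjugate is $(m,g)$ with $m<g$, which can never be an inversion of $u$) and to accept each $t_j$ with $j\ge g$ (the conjugate is $(g,j+1)$, which is automatically an inversion of $u$). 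Your route is longer and more computational, but also more self-contained: it never uses the sortability of $u$, so it proves the statement for an arbitrary element of the coset, and it supplies exactly the detail that the paper leaves implicit in its disjointness claim. One tiny slip in your closing parenthetical: when $g=k$ it is the trailing factor $t_g\cdots t_{k-1}$ that is empty, not necessarily $b_{g-1}$; this does not affect the argument.
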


\begin{proof}  
Since $u \in t_g t_{g+1} \cdots t_{k-1} S_{[1,k-1]}$, we know that $u$ has a reduced word which begins $t_g t_{g+1} \cdots t_{k-1}$. 
In particular, the generators $t_g$, $t_{g+1}$, \dots, $t_{k-1}$ are contained in any reduced word for $u$, so the first block of the $\lc$-sorting word for $u$ contains these generators. 
We now need to show that $t_{g-1}$ is \emph{not} in this leading block. 
Suppose, for the sake of contradiction, that the leading block of the $\lc$-sorting word 
contains $t_f t_{f+1} \cdots t_{k-1}$ and not $t_{f-1}$, for some $f<g$. Then $u \geq t_f t_{f+1} \cdots t_{k-1}$. 
But the upper order ideal of elements above $t_f t_{f+1} \cdots t_{k-1}$ is disjoint from the interval $t_g t_{g+1} \cdots t_{k-1} S_{[1,k-1]}$.  
\end{proof}

Recall that $D$ is the fundamental domain $\{ x \in V^{\ast} : \langle \alpha_i, x \rangle  \geq 0 \}$.
\begin{lemma} \label{w0 negation}
Let $x \in S_k$ lie in the coset $t_g t_{g+1} \cdots t_{k-1} S_{[1,k-1]}$. Let $x'$ be the unique element of $S_k$ so that $x D = - x' D$. Then $x'$ lies in the coset $t_{g-1} \cdots t_2 t_1  S_{[2,k]}$.
\end{lemma}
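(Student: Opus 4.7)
The plan is to reduce the statement to the single identity that $\lc w_0$ fixes $1$ in $S_k$.

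First I would identify $x'$ explicitly as $xw_0$. The negation map $\iota\colon V^{\ast}\to V^{\ast}$, $v\mapsto -v$, commutes with the linear action of $W$, so $\iota(xD)=x\,\iota(D)=x(-D)$. In $S_k$ the longest element satisfies $w_0\alpha_i=-\alpha_{k-i}$, which gives
\[ w_0 D = \{u\in V^{\ast} : \langle w_0\alpha_i,u\rangle\ge 0\text{ for all }i\} = \{u : \langle \alpha_j,u\rangle\le 0\text{ for all }j\} = -D. \]
Hence $-x'D=\iota(xD)=xw_0D$, and simple transitivity of the $W$-action on chambers yields $x'=xw_0$.

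Next, writing $x=t_g t_{g+1}\cdots t_{k-1}\,z$ with $z\in S_{[1,k-1]}$, the desired conclusion $xw_0\in t_{g-1}\cdots t_1\,S_{[2,k]}$ becomes, after multiplying on the left by $t_1 t_2\cdots t_{g-1}$, the assertion $\lc\,z\,w_0\in S_{[2,k]}$. Conjugation by $w_0$ sends $t_i$ to $t_{k-i}$, so $w_0\,S_{[1,k-1]}\,w_0=S_{[2,k]}$. This rewrites $zw_0=w_0 z'$ for some $z'\in S_{[2,k]}$, reducing the problem to showing $\lc w_0\in S_{[2,k]}$.

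Finally, $\lc=t_1 t_2\cdots t_{k-1}$ acts on $\{1,2,\ldots,k\}$ as the cycle $(1\,2\,\cdots\,k)$ while $w_0$ acts as the reversal $i\mapsto k+1-i$, so $(\lc w_0)(1)=\lc(k)=1$. Thus $\lc w_0$ lies in the stabilizer of $1$, namely $S_{[2,k]}$, completing the proof.

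The step requiring the most care is the identification $x'=xw_0$, since it depends on the chamber-level equality $w_0D=-D$ and the commutation of $\iota$ with the $W$-action on $V^{\ast}$; once that is in hand, the remainder is a direct computation with permutations and cosets and presents no substantial obstacle.
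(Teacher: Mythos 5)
Your proof is correct and follows essentially the same route as the paper's: identify $x'=xw_0$, use $w_0 S_{[1,k-1]}w_0=S_{[2,k]}$ to move $w_0$ across the coset, and reduce to the identity $t_g\cdots t_{k-1}w_0 \in t_{g-1}\cdots t_1 S_{[2,k]}$, which the paper dismisses as "a straightforward computation." You simply carry out that computation explicitly (via $\lc w_0$ fixing $1$) and justify $x'=xw_0$ in more detail; both additions are accurate.
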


\begin{proof}
We have $x'=x w_0$. We compute
\[ t_g t_{g+1} \cdots t_{k-1} S_{[1,k-1]} w_0 = t_g t_{g+1} \cdots t_{k-1} w_0 S_{[2, k]} = t_{g-1} \cdots t_2 t_1 S_{[2,k]}  \]
where the second equality is a straightforward computation.
\end{proof}

We combine the above lemmas to prove:

\begin{prop} \label{prop reversal}
Suppose $u$ is $\lc$-sortable, suppose $u'$ is $\lc\,^{-1}$-sortable, and suppose that $\Cone_{\lc}(u) = -\Cone_{\lc\,^{-1}}(u')$. 
If $t_{i_1} t_{i_2} \cdots t_{i_r} t_g t_{g+1} \cdots t_{k-1}$ is the first block of the $\lc$-sorting word for $u$ (with $i_r \leq g-2$), and $t_{j_1} t_{j_2} \cdots t_{j_s} t_{g'} t_{g'-1} \cdots t_1$ is the first block of the $\lc\,^{-1}$-sorting word for $u'$ (with $j_s \geq g'+2$), then $g'=g-1$.
\end{prop}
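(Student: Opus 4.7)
The plan is to read off $u$'s coset from its sorting word, push through $w_0$ to locate $uw_0$, then use Proposition~\ref{double finite} to identify $u'$ and apply the mirror versions of the preceding lemmas.

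\begin{proof}[Proof plan for Proposition~\ref{prop reversal}]
First, I would run Lemma~\ref{coset block1} in reverse: because the first block of the $\lc$-sorting word for $u$ is $t_{i_1}\cdots t_{i_r}t_g t_{g+1}\cdots t_{k-1}$ with $i_r\le g-2$, the element $u$ is bounded below in the weak order by $t_gt_{g+1}\cdots t_{k-1}$ but not by $t_{g-1}t_g\cdots t_{k-1}$, so $u$ lies in the coset $t_g t_{g+1}\cdots t_{k-1}S_{[1,k-1]}$. Then Lemma~\ref{w0 negation} (or the coset computation in its proof) places $uw_0$ in $t_{g-1}t_{g-2}\cdots t_1\,S_{[2,k]}$.

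Next, I would invoke Proposition~\ref{double finite} applied to $W=S_k$ and $c=\lc$: the hypothesis $\Cone_\lc(u)=-\Cone_{\lc^{-1}}(u')$ forces $u'=\pidown^{\lc^{-1}}(uw_0)$. The entire setup is symmetric under the diagram involution $t_i\leftrightarrow t_{k-i}$, which exchanges $\lc$ with $\lc^{-1}$ and $S_{[1,k-1]}$ with $S_{[2,k]}$. Transporting Lemma~\ref{pidown coset} across this involution (equivalently, rerunning its proof with the roles of ``first'' and ``last'' generators swapped) shows that $\pidown^{\lc^{-1}}(uw_0)$ lies in the same $S_{[2,k]}$-coset as $uw_0$. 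Hence $u'$ also lies in $t_{g-1}t_{g-2}\cdots t_1\,S_{[2,k]}$.

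Finally, I would transport Lemma~\ref{coset block1} across the same involution: since $u'$ is $\lc^{-1}$-sortable and belongs to $t_{g-1}t_{g-2}\cdots t_1\,S_{[2,k]}$, the first block of its $\lc^{-1}$-sorting word has the form $t_{j_1}\cdots t_{j_s}\,t_{g-1}t_{g-2}\cdots t_1$ with $j_s\ge (g-1)+2$. Comparing with the given form $t_{j_1}\cdots t_{j_s}t_{g'}t_{g'-1}\cdots t_1$ yields $g'=g-1$.

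The only real content beyond quoting previous lemmas is the symmetry argument, namely verifying that Lemmas~\ref{pidown coset} and~\ref{coset block1} continue to hold for $\lc^{-1}$ with $S_{[2,k]}$ in place of $S_{[1,k-1]}$. This is where one must be careful, but the diagram automorphism $t_i\mapsto t_{k-i}$ of $S_k$ carries $\lc$ to $\lc^{-1}$, sortability to sortability, and cosets of $S_{[1,k-1]}$ to cosets of $S_{[2,k]}$, so the arguments go through verbatim. I expect this bookkeeping to be the main (very modest) obstacle; once it is in place, the chain of coset identifications is essentially forced.
\end{proof}
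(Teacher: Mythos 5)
Your proposal is correct and follows essentially the same route as the paper's proof: read off the coset of $u$ from its first block via Lemma~\ref{coset block1}, push through $w_0$ with Lemma~\ref{w0 negation}, identify $u'$ as $\pidown^{\lc^{-1}}(uw_0)$ from the antipodal cone condition, and transport Lemmas~\ref{pidown coset} and~\ref{coset block1} across the diagram involution $t_i\mapsto t_{k-i}$ to pin down the coset of $u'$ and hence $g'=g-1$. The only cosmetic difference is that the paper reaches $u'=\pidown^{\lc^{-1}}(uw_0)$ via the containment $uw_0D\subseteq\Cone_{\lc^{-1}}(u')$ rather than by citing Proposition~\ref{double finite} plus injectivity of $v'\mapsto\Cone_{\lc^{-1}}(v')$, but both are valid.
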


\begin{proof}
By Lemma~\ref{coset block1} and its image under the automorphism $t_i \mapsto t_{k-i}$, the permutations $u$ and $u'$ lie in the cosets $t_g t_{g+1} \cdots t_{k-1} S_{[1,k-1]}$ and $t_{g'} \cdots t_2 t_1 S_{[2,k]}$.
Write $x'$ for the permutation so that $-u D = x' D$, so Lemma~\ref{w0 negation} shows that $x'$ is in the coset $t_{g-1} \cdots t_2 t_1  S_{[2,k]}$.
We have $x' D = - u D \subseteq - \Cone_{\lc}(u) = \Cone_{\lc^{-1}}(u')$ so, by Lemma~\ref{pidown coset}, the permutation $u'$ also lies in the coset $t_{g-1} \cdots t_2 t_1  S_{[2,k]}$.
We see that $g'=g-1$, as desired.
\end{proof}

\begin{remark}
The results of this section can be given alternate proofs using the combinatorial descriptions of type-$A$ Cambrian combinatorics from~\cite{cambrian}. 
The reader who wishes to attempt this should note that, if we write $w$ in one line notation as $a_1 a_2 \cdots a_k$, then $u$ lies in the cosets $t_{a_k} t_{a_k+1} \cdots t_{k-1} S_{[1, k-1]}$ and $t_{a_1-1} \cdots t_2 t_1 S_{[2,k]}$.
\end{remark}

\section{Proof of Theorem~\ref{main theorem}}\label{proofs sec}
In this section, we complete the proof of Theorem~\ref{main theorem}, which asserts that the pair  
$(\DCamb_{\Omega}, \DC_{\Omega})$ is a complete descending framework.
As discussed in Section~\ref{frameworks sec}, this also completes the proof of Theorem~\ref{cycle thm}.

\begin{proof}[Proof of Theorem~\ref{main theorem}]
The Co-label and Base conditions are obviously satisfied, and the Sign condition holds because the labels are roots, 
so we need to check the Transition Condition to establish that $(\DCamb_\Omega, \DC_\Omega)$ is a framework.  
That check is the hardest part of the proof and we leave it to the end.

Once the Transition condition is checked, the framework is complete by construction.
We now show that it is a descending framework.
To see that the Unique minimum condition holds, we need to show that every $v$ in $\DCamb_\Omega$ has a negative label, except for the identity in $\Camb_\Omega$.
First, suppose $v$ is a vertex of $\Camb_\Omega$.
If $v$ is not the identity, then it has at least one cover reflection, and thus Proposition~\ref{lower walls} says that $v$ has at least one negative label.
Now, suppose that $v$ is a vertex of $-\Camb_{-\Omega}$. By Proposition~\ref{TwoTypes}, $v$ lies in some parabolic $W_{\br{r}}$ and thus Lemma~\ref{sign omit} implies that $v$ has a negative label.  

The previous paragraph establishes the Unique minimum condition.
The Full edge condition holds because the framework is complete.
The Descending chain condition holds because neither $\Camb_\Omega$ nor $-\Camb_{-\Omega}$ has any infinite chains.
Thus, to show that $(\DCamb_\Omega, \DC_\Omega)$ is a descending framework, we must check the Transition condition.  
To check this, we make use of the symmetry of the Transition condition and the antipodal symmetry of switching $\Omega$ with $-\Omega$ while applying the antipodal map.

Let $e$ be an edge connecting vertices $v$ and $w$, with $\DC_\Omega(v,e) = \beta$.
We first check that $- \beta = \DC_{\Omega}(w,e)$. 
Either $v$ and $w$ are both in $\Camb_{\Omega}$, or both in $-\Camb_{\Omega}$.
By the antipodal symmetry, we assume the former. 
By the symmetry of the Transition condition, we may assume that $v \geq w$, so $\beta$ is a negative root.
Since $\beta$ is a negative root, it is not the extra root added to $C_{\Omega}(v)$ to make $\DC_\Omega(v)$ (see Lemma~\ref{Funny Roots}). 
So $\beta = C^{s_p}_{c(\Omega,J(v) \cup \{ s_p \})}(v)$ for some $s_p$.
Moreover, since $\beta$ is a negative root, we have $s_p \in J(v)$  (see Lemma~\ref{sign omit}).   
Since $v \in W_{J(v)}$ and $w \leq v$, we have $w \in W_{J(v)}$. 
Writing $c$ for $c(\Omega,J(v))$, the Cambrian framework $(\Camb_c,C_c)$ is a framework, so $- \beta \in C_c(w)$ and, by Lemma~\ref{observe}, $-\beta \in \DC_\Omega(w)$.
This trick of reducing to an acyclic parabolic will be used repeatedly.

Let $\gamma \in \DC_\Omega(v)$, with $\gamma \neq \beta$. 
For brevity, when $J\subsetneq S$, we will refer to ``the Transition condition in the Cambrian framework $(\Camb_{c(\Omega,J)},C_{c(\Omega,J)})$" as ``the Transition condition in $W_J$.''

We must check that $\gamma+[\sgn(\beta)\omega(\beta,\gamma)]_+\,\beta$ is in $\DC_\Omega(w)$.

\noindent
\textbf{Case 1.} $|J(v)|$, $|J(w)| \leq n-2$.  
Either $v$ and $w$ are both in $\Camb_{\Omega}$ or both in $\Camb_{-\Omega}$, and by the antipodal symmetry we may assume the former. 
By the symmetry of the Transition condition, we may assume $v \geq w$. 
Then $w$ and $v$ are in the acyclic parabolic $W_{J(v)}$ and $\gamma = C^{s_r}_{c(J(v) \cup \{ s_r \} )}$ for some $r$.  
The claim follows from Lemma~\ref{observe} and the Transition condition in $W_{J(v) \cup \{ s_r \}}$.

\noindent
\textbf{Case 2.} $|J(v)|=|J(w)|= n-1$. We have $J(v)=J(w)$ since, if $v \leq w$, then $J(v) \subseteq J(w)$ and similarly if $v \geq w$. Set $J = J(v) = J(w)$. By the antipodal symmetry, we may assume that $\gamma = C^{s_r}_{c(J)}(v)$ for some $s_r \in J$. Then the claim follows from Lemma~\ref{observe} and the Transition condition in $W_{J}$.

\noindent 
\textbf{Case 3.} One of $|J(v)|$, $|J(w)|$ is $n-1$ and the other is not.
By the antipodal symmetry, we may assume that $v,w \in \Camb_{\Omega}$.
By the symmetry of the Transition condition, we may assume $|J(v)| = n-1$, so that $v \geq w$ and $\beta$ is a negative root.
Thus $v$ and $w$ are in $W_{J(v)}$. 

\noindent
\textbf{Case 3a.} $\gamma$ is of the form $C^{s_r}_{c(\Omega,J(v))}$ 
for some $s_r$ in $J(v)$. Then the claim follows from the Transition condition in $W_{J(v)}$ as before. 

\noindent
\textbf{Case 3b.} We are left with the only hard case:
We have $w\le v$ in $\Camb_{\Omega}$ with $|J(v)| = n-1$ and with $| J(w)| \leq n-2$, and furthermore $\gamma$ is the lone element of $\DC_\Omega(v) \setminus C_{J(v)}(v)$. 
Choose $s_i$ so that $J(v) = \br{s_i}$ and thus $\gamma = -C^{s_{i+2}}_{c[i+n,i+2]}(\eta(v))$.
So $v$ and $w$ are $c[i+1, i+n-1]$-sortable elements.

In this paragraph, we will show that $\beta = C^{s_{i-1}}_{c[i+1,i+n-1]}(v)$.
By Proposition~\ref{TwoTypes}, the cone $\Cone_\Omega(v)$ intersects both $\Tits(A)$ and $- \Tits(A)$ but $\Cone_\Omega(w)$ lies in 
the closure of $\Tits(A)$. 
Thus, the facet $\Cone_\Omega(w) \cap \Cone_\Omega(v) = \beta^{\perp} \cap \Cone_\Omega(v)$ lies in the closure of $\Tits(A)$. 
By Lemma~\ref{Funny Roots}, $\partial \Tits(A) = \delta^{\perp}$ contains the \mbox{codimension-$2$} face $F$ of $\Cone_\Omega(v)$ lying in $C^{s_{i-1}}_{c[i+1,i+n-1]}(v)^{\perp}$ and $-C^{s_{i+2}}_{c[i+n,i+2]}(\eta(v))^{\perp}$. 
The $n-2$ rays of $\Cone_\Omega(v)$ contained in $F$ are in $\partial\Tits(A)$, but since $\Cone_\Omega(v)$ intersects both $\Tits(A)$ and $- \Tits(A)$, one of the other two rays is in $\Tits(A)$ while the other is in $-\Tits(A)$.
Therefore, all facets of $\Cone_\Omega(v)$ cross $\partial\Tits(A)$ except the facets defined by $C^{s_{i-1}}_{c[i+1,i+n-1]}(v)$ and $-C^{s_{i+2}}_{c[i+n,i+2]}(\eta(v))$, so that $\beta$ is one of these two roots.
But $\beta$ is negative, so by Proposition~\ref{Funny Roots} it must be $C^{s_{i-1}}_{c[i+1,i+n-1]}(v)$.

Write $v$ as $c[i+1, i+n-1] u$ and suppose the first block of the ${c[i+1, i+n-2]}$-sorting word for $u$ is  $s_{i_1} s_{i_2} \cdots s_{i_p} s_{g} s_{g+1} \cdots s_{i+n-2}$ as in the proof of Lemma~\ref{Funny Roots}. 
As in that proof, $\beta = - \alpha_{i+1} - \alpha_{i+2} - \cdots - \alpha_g$ and $\gamma = \alpha_{g+1} + \cdots + \alpha_{i+n}$. We compute that $\omega(\beta, \gamma) = 0$, so our goal is to show that $\gamma \in \DC_\Omega(w)$. 
Specifically, we will show that $\gamma = C^{s_i}_{J(w) \cup \{ s_i \}}(w)$. 

Our first task is to check that $s_g \not \in J(w)$. 
Set $t_r = s_{i+1} s_{i+2} \cdots s_r \cdots s_{i+2} s_{i+1}$ for each $r=i+1,\ldots,i+n-2$.
In particular, $\beta$ is the negative root associated to $t_g$. 
The $t_r$, for $i+1 \leq r \leq i+n-2$, are inversions of $v$. 
Proposition~\ref{lower walls} says $t_r$ is a cover reflection of $v$, so $t_{i+1}$, $t_{i+2}$, \dots, $t_{g-1}$ are inversions of $t_g v$ and $t_g$ is not. 
We deduce that $t_g v \geq c[i+1, g-1]$ and $t_g v \not \geq c[i+1, g]$. 
Since $\Cone_\Omega(v)$ and $\Cone_\Omega(w)$ share a facet contained in $\beta^\perp$ and since $vD$ is in $\Cone_\Omega(v)$, we see that $t_gvD$ is in $\Cone_\Omega(w)$.
Thus Theorem~\ref{pidown fibers} says that $w=\pidown^{\Omega}(t_g v)$.  
Since $c[i+1, g-1]$ and $c[i+1, g]$ are both $c[i+1,i+n-1]$-sortable, Lemma~\ref{helpful} and the relations $t_g v \geq c[i+1, g-1]$ and $t_g v \not \geq c[i+1, g]$ now imply that $w\geq c[i+1,g-1]$ and $w \not \geq  c[i+1, g]$. 
Therefore, the first block of the $c[i+1, i+n-1]$-sorting word of $w$ does not contain $s_g$, and thus $s_g \not \in J(w)$ because $w$ is $c[i+1, i+n-1]$-sortable. 

We have shown that $J(w) \cup \{ s_i \} \subseteq \br{s_g}$, so $C^{s_i}_{J(w) \cup \{ s_i \}}(w) = C^{s_{i+n}}_{c[g+1, g+n-1]}(w)$. 
(We changed $s_i$ to $s_{i+n}$ for clarity, as $g+2 \leq i+n \leq g+n-1$.)
We define $u_r = s_{g+1} s_{g+2} \cdots s_r \cdots s_{g+2} s_{g+1}$ for each $r=g+1,\ldots,i+n-1$ and note that 
\[ u_r = \left( c[i+1, i+n-1] s_g s_{g+1} \cdots s_{r-2} \right) s_{r-1} \left( c[i+1, i+n-1] s_g s_{g+1} \cdots s_{r-2} \right)^{-1}. \]
Then $u_r$ is an inversion of $v$ for $g+1 \leq r \leq i+n-1$. 
 Thus, $u_r$ is also an inversion of $t_g v$ for $r$ in this range, and we deduce that $t_g v \geq c[g+1, i+n-1]$. 
 Using once again that $\pidown^{\Omega}$ is order preserving, this implies that $w=\pidown^\Omega(t_gv) \geq c[g+1, \ldots, i+n-1]$. 
 On the other hand, $s_i \not \in J(w)$. 
 So the $c[g+1, g+n-1]$-sorting word for $w$ begins $s_{g+1} s_{g+2} \cdots s_{i+n-1}$, before skipping $s_{i+n}$. 
 This shows that $C^{s_{i+n}}_{c[g+1, g+n-1]}(w) =  \alpha_{g+1} + \cdots + \alpha_{i+n}= \gamma$ as promised. 
 \end{proof}

\subsection*{Acknowledgments} 
We thank Salvatore Stella for helpful comments and for pointing out an error in an early version of this manuscript.
We also thank Christophe Hohlweg for helpful comments, and for pointing out the observation made in Remark~\ref{Shi remark}.


\begin{thebibliography}{18}

\bibitem{BjBr}
A. Bj\"{o}rner and F. Brenti,
\textit{Combinatorics of Coxeter groups.}
Graduate Texts in Mathematics, \textbf{231},
Springer, New York, 2005. 

\bibitem{Bour}
N. Bourbaki, \textit{Lie groups and Lie algebras. Chapters 4-6.}
Translated from the 1968 French original by A. Pressley, 
Graduate Texts in Mathematics, Springer-Verlag, 2002.

\bibitem{ga}
S. Fomin and A. Zelevinsky,
\textit{$Y$-systems and generalized associahedra.}
Ann. of Math. (2) \textbf{158} (2003), no. 3, 977--1018.

\bibitem{ca2}
S. Fomin and A. Zelevinsky,
\textit{Cluster Algebras II: Finite Type Classification.}
Inventiones Mathematicae \textbf{154} (2003), 63--121.

\bibitem{ca4}
S. Fomin and A. Zelevinsky,
\textit{Cluster Algebras IV: Coefficients.}
Compositio Mathematica \textbf{143} (2007), 112--164.

\bibitem{GHKK}
M. Gross, P. Hacking, S. Keel, and M. Kontsevich,
\textit{Canonical bases for cluster algebras.}
Preprint, 2014. \arxiv{1411.1394}

\bibitem{Humphreys}
J.~Humphreys,
\textit{Reflection Groups and Coxeter Groups.}
Cambridge Studies in Advanced Mathematics {\bf 29},
Cambridge Univ. Press, 1990.

\bibitem{Kac}
V. Kac,
\emph{Infinite-dimensional Lie algebras.}
Third edition. Cambridge University Press, Cambridge, 1990.

\bibitem{cambrian}
N.~Reading, 
\textit{Cambrian Lattices.}
Adv. Math. \textbf{205} (2006), no.~2, 313--353.

\bibitem{sortable}
N.~Reading,
\emph{Clusters, Coxeter-sortable elements and noncrossing partitions.}
Trans. Amer. Math. Soc. \textbf{359} (2007), no. 12, 5931--5958.

\bibitem{sort_camb}
N.~Reading,
\emph{Sortable elements and Cambrian lattices.}
Algebra Universalis \textbf{56} (2007), no. 3-4, 411--437. 

\bibitem{universal}
N.~Reading,
\textit{Universal geometric cluster algebras.}
Math. Z. \textbf{277} (2014), no. 1--2, 499--547.

\bibitem{camb_fan}
N.~Reading and D.~Speyer,
\emph{Cambrian Fans.} 
J. Eur. Math. Soc. (JEMS) \textbf{11} (2009), no. 2, 407--447. 

\bibitem{typefree}
N.~Reading and D.~Speyer,
\emph{Sortable elements in infinite Coxeter groups.} 
Trans. Amer. Math. Soc. \textbf{363} (2011) no. 2, 699--761. 

\bibitem{cyclic}
N.~Reading and D.~Speyer,
\emph{Sortable elements for quivers with cycles.}
Electron. J. Combin. \textbf{17(1)} (2010), Research Paper 90, 19 pp. 

\bibitem{framework}
N.~Reading and D.~Speyer,
\emph{Combinatorial frameworks for cluster algebras.}
Int. Math. Res. Not., published online May 11, 2015. (Full citation information pending.)

\bibitem{afframe}
N.~Reading and D.~Speyer,
\emph{Cambrian frameworks for cluster algebras of affine type.}
Preprint, 2015. \arxiv{1504.00260} 

\bibitem{Shi}
J.-Y.~Shi, 
\emph{The Kazhdan-Lusztig cells in certain affine Weyl groups.}
Lecture Notes in Mathematics \textbf{1179}. Springer-Verlag, Berlin, 1986. 

\end{thebibliography}
\end{document}